\documentclass[11pt]{siamart251216}

\usepackage{lipsum}
\usepackage{amsfonts}
\usepackage{graphicx}
\usepackage{epstopdf}
\usepackage{amsmath}
\newtheorem{remark}[theorem]{Remark}
\usepackage{algorithmic}
\usepackage{algorithm}
\ifpdf
  \DeclareGraphicsExtensions{.eps,.pdf,.png,.jpg}
\else
  \DeclareGraphicsExtensions{.eps}
\fi

\newsiamremark{hypothesis}{Hypothesis}
\crefname{hypothesis}{Hypothesis}{Hypotheses}
\newsiamthm{claim}{Claim}
\newsiamremark{fact}{Fact}
\crefname{fact}{Fact}{Facts}

\usepackage{amssymb, amsmath}
\usepackage{amscd}
\usepackage{verbatim}
\usepackage{enumerate}
\usepackage{color}
\usepackage{tikz}
\usepackage{colordvi}
\usepackage{graphics}
\usepackage{graphicx}

\allowdisplaybreaks

\newtheorem{alg}[algorithm]{Algorithm}

\numberwithin{equation}{section}

\newcommand{\beq}{\begin{equation}}
\newcommand{\eeq}{\end{equation}}
\newcommand{\beqa}{\begin{eqnarray}}
\newcommand{\eeqa}{\end{eqnarray}}

\usepackage[textsize=tiny]{todonotes}

\begin{document}

\ifpdf
\hypersetup{
  pdftitle={NGMRES FOR NSE},
  pdfauthor={Yunhui He and  Leo G Rebholz}
}
\fi

% Sets running headers as well as PDF title and authors
\headers{NGMRES FOR NSE}{Yunhui He and  Leo G Rebholz}

\title{Convergence analysis and proof of acceleration for NGMRES applied to the Picard iteration for Navier-Stokes equations}
\thanks{Submitted to the editors DATE.}

% Authors: full names plus addresses.
\author{Yunhui He\thanks{Department of Mathematics, University of Houston, 3551 Cullen Blvd, Room 641, Houston, Texas 77204-3008, USA 
  (\email{yhe43@central.uh.edu}).}
\and Leo G Rebholz\thanks{School of Mathematical and Statistical Sciences, Clemson University, Clemson, SC 29634, USA (\email{rebholz@clemson.edu}).  The work of LR was partially funded by Department of Energy grant DE-SC0025292}}

\maketitle

% REQUIRED

\begin{abstract}
We consider nonlinear GMRES (NGMRES) as an acceleration technique for the Navier–Stokes Picard iteration, a direction that has not previously been explored. We identify the optimal norm for the least squares optimization problem arising in the NGMRES algorithm, and establish a convergence analysis for NGMRES with general depth that proves NGMRES scales the Picard Lipschitz/contraction constant by the gain of the optimization problem.  To our knowledge, this is the first convergence proof for NGMRES that identifies the mechanism responsible for convergence acceleration.  Numerical experiments demonstrate that the convergence estimates are remarkably sharp.  In addition, NGMRES greatly improves the performance of the Picard iteration, even in cases where the unaccelerated iteration diverges.  
\end{abstract}
% REQUIRED
\begin{keywords}
NGMRES, Convergence acceleration, Navier-Stokes, Picard iteration, optimization norm choice \\
\end{keywords}

% REQUIRED  
\begin{MSCcodes}
65N22, 35Q30,  65N30, 65H10
\end{MSCcodes}
% 65N22  Numerical solution of discretized equations for boundary value problems involving PDEs
% 35Q30  Navier-Stokes equations
% 65N30 Finite element, Rayleigh-Ritz and Galerkin methods for boundary value problems involving PDEs
% 65H10  Numerical computation of solutions to systems of equations
%========================================================================

 \section{Introduction}\label{s:intro}
Nonlinear partial differential equations (PDEs) play a central role in many real world applications, yet they are often difficult to solve analytically. This motivates the development of efficient iterative nonlinear solvers and the corresponding convergence theory. A variety of nonlinear acceleration methods have been proposed in the literature, including multisecant methods \cite{fang2009two}, nonlinear conjugate gradient methods \cite{hager2006survey}, Anderson acceleration (AA) \cite{Anderson65}, and nonlinear GMRES (NGMRES). This work focuses on NGMRES, which for a given fixed point operator $q(u)$ used to solve $g(u)=0$, is usually written in the following form (a formal definition is given in Section 2) for depth $m$ and iteration $k+1$:  
\[
 				u_{k+1} = q(u_k) + \sum_{i=0}^{m_k=\min\{k,m\}}\beta_i^{k+1} \left(q(u_k)- u_{k-i} \right),
\]
where $\beta^{k+1}=\big(\beta_0^{k+1},\beta_1^{k+1},\cdots, \beta_{m_k}^{k+1}\big)$ is obtained by solving the  least-squares optimization problem
 \begin{equation}\label{eq:min-NG2} 
 				\min_{\beta^{k+1}} \| g(q(u_k))+\sum_{i=0}^{m_k=\min\{k,m\}} \beta_i^{k+1} \left(g(q(u_k))-g(u_{k-i}) \right)\|^2.
 \end{equation}

The NGMRES idea was first introduced by Washio and Oosterlee in 1997, who proposed a Krylov subspace based acceleration technique analogous to GMRES for nonlinear multigrid on the finest level \cite{washio1997krylov} for difficult nonlinear elliptic scalar problems and for systems of nonlinear equations. In 2000, Oosterlee and Washio extended this approach to accelerate nonlinear multigrid on coarse levels \cite{oosterlee2000krylov} for recirculating incompressible flow problems. Since then, NGMRES type accelerations have been applied in several areas, including tensor decomposition \cite{sterck2012nonlinear, sterck2013steepest, sterck2021asymptotic}, CUTEst problems \cite{riseth2019objective}, and image restoration \cite{chang2003acceleration,savage2005improved}.  The comparison in \cite{sterck2021asymptotic} shows that AA and NGMRES are often competitive, and it is an open question as to what kinds of problems exist where one is 
significantly better than the other. Several new variants of NGMRES have been proposed recently. For example, \cite{he2026convergenceANG} introduces a periodic scheme in which the NGMRES step is applied only every $p$ fixed-point iterations. A generalized alternating NGMRES method is developed in \cite{he2025generalized} for PDE constrained optimization governed by transport equations, where comparisons with Anderson type methods indicate that alternating NGMRES performs favorably. These studies highlight the effectiveness of NGMRES in accelerating nonlinear fixed point iterations. 

To date, only limited progress has been made on the convergence theory for NGMRES. In \cite{greif2026convergence}, the authors show that, under certain conditions, full NGMRES (i.e., incorporating all previous iterates) applied to the Richardson iteration is equivalent to classical GMRES. In \cite{he2026ngmresprecon}, an analogous equivalence is established for preconditioned Richardson and preconditioned GMRES. For nonlinear problems, the work \cite{he2025convergence} initiated a convergence analysis of NGMRES applied to a simple contractive fixed point iteration. To the best of our knowledge, until this paper, there has been no convergence analysis for NGMRES which characterizes its mechanism for convergence acceleration.

Herein, we consider NGMRES applied to the Picard iteration 
for the Navier-Stokes equations (NSE), which are given by: on a domain $\Omega \subset \mathbb{R}^d, d=2,3$, 	
 	\begin{equation}\label{NS1}
 		\left\{\begin{aligned}
 			-\nu \Delta u+u\cdot\nabla u+ \nabla p&={f} \quad \text{in}~\Omega,\\
 			\nabla\cdot {u}&=0\quad \text{in}~\Omega,
 		\end{aligned}\right.
 	\end{equation}{black}
together with appropriate boundary conditions.  Here $u$ and $p$ represent the unknown velocity and pressure, respectively, $\nu>0$ is the kinematic viscosity which is inversely proportional to the Reynolds number $Re \sim \frac{1}{\nu}$, and ${f}$ is a given forcing.  The Picard iteration for the NSE is given by 
 	\begin{equation}\label{Pic1}
		\left\{\begin{aligned}
 		-\nu \Delta u_{k+1}+u_k\cdot\nabla u_{k+1}+ \nabla p_{k+1}&={f}, \\
 		\nabla\cdot {u}_{k+1}&=0.
 		\end{aligned}\right.
		\end{equation}	
		This iteration is globally stable and globally linearly convergent in the $H^1_0$ norm with rate $\kappa:=M\nu^{-2} \| f\|_{H^{-1}}$ (where $M$ is a constant dependent on the domain size defined in Section 2) provided $\kappa<1$ \cite{GR86,J16};  {\color{black} we also show herein that the sequence of nonlinear residuals for the Picard iteration also converge with rate $\kappa$ when $\kappa<1$.}  However, as $\kappa$ increases (i.e. $Re$ increases), the convergence of Picard slows and eventually fails \cite{PRTX25,PRX19} for $\kappa$ large enough.  Unfortunately, such failures occur for $\kappa$ well within the range of physically relevant problems \cite{Laytonbook}, and thus there is significant interest in improving its convergence properties.  To improve convergence of Picard iteration, many methods have been studied over the years \cite{ES96,FMW19,PRX19,farrell2021reynolds,thompson1989adaptive,ghia1982high,vanka1986block,washio1997krylov,oosterlee2000krylov}, with varying degrees of success.

{\color{black}The motivation for our choosing to study NGMRES-Picard for the NSE is to develop improved nonlinear solvers, since this system is used to model many application problems and thus improved solvers are always needed.  Our initial testing shows good results, including in comparisons where NGMRES outperforms AA when smaller depths are used.  Additionally, we aim to develop a convergence and acceleration theory for NGMRES by following the recent AA theory development, which began by first considering AA-Picard for the NSE in 2019 in \cite{PRX19}.  In that paper, the gain of the AA optimization problem was identified as the mechanism for AA convergence acceleration. The following year, this problem-specific theory was extended by the same authors to general contractive fixed point iterations \cite{EPRX20} and then a year later with sharper bounds to general fixed point iterations which need not be contractive \cite{PR21}.  Indeed, the authors of this paper have already begun study extending the results of this paper to more general settings.
We note Picard is chosen herein as the iteration for NSE over Newton since Picard is more robust with respect to both Reynolds number and initial guess \cite{PRTX25}.  However, one could combine Newton with the methods used herein by using NGMRES-Picard until the residual is sufficiently small and then switching to Newton. }

We now proceed formally for the moment in order to give a high level description of our main results, suppressing boundary conditions and moving the problem setting to the divergence-free velocity space (this will all be made mathematically precise in Sections 2 and 3). Take $V\subset H^1(\Omega)$ to be the divergence-free velocity space and define $q:V\rightarrow V$ as the solution operator of the (weak) Picard iteration and define the nonlinear residual operator $g:V\rightarrow V'$ by
\[
g(v) = -\nu \Delta v+v\cdot\nabla v - f,
\]
where $V'$ is the dual space of $V$, equality is meant in a weak sense, and the pressure and divergence-free constraint drop since $g:V\rightarrow V'$.  Then for $m\ge k$, NGMRES in this setting can be written (in the equivalent constrained optimization formulation) as
\[
 				u_{k+1} = \alpha^{k+1}_{k+1} q(u_k) + \sum_{i=k-m}^k \alpha^{k+1}_i u_i,
\]
where $\big(\alpha^{k+1}_{k+1},\ \alpha^{k+1}_{k},\cdots,\ \alpha^{k+1}_{k-m}\big)$ are obtained by solving the least-squares optimization problem
 \begin{equation}\label{eq:min-NG3} 
 				\min_{\sum_{i=k-m}^{k+1} \alpha_i^{k+1} = 1} 
				\| \alpha^{k+1}_{k+1} g(q(u_k))+ \sum_{i=k-m}^k \alpha^{k+1}_i g(u_i) \|_{V'}^2.
 \end{equation}
Note that $V'$ is used because $g$ maps into $V'$.  The use of this norm is critical for our convergence analysis, and moreover our computations show better performance when using $V'$ as the norm compared to  $\ell^2$.  
%We also show herein that the sequence of nonlinear residuals converges with rate $\kappa$
%$\{ g(q(u_k)) \}_k$ using Picard iterates converges with rate $\kappa$ if $\kappa<1$;
%\todo{Should we rephrase this "using Picard iterates converges with rate" since I do not think this is convergence, because the inequlity is related $gq(u)$ and $g(u)$ not $q(u)$ for latter.} 
%$ {\color{black}\| g( q( u_{k})) \|_{V'} } 
%\| g(\tilde u_{k+1}) \|_{V'}  
%\le \kappa \| g( u_{k}) \|_{V'}$ (i.e. the same rate %Picard converges in $H^1_0$ norm). 

Our main analytical results can now be stated as follows.  Define $\theta_{k+1}$ and $\gamma_{k+1}$ by
\begin{align*}
 \theta_{k+1} & := \frac{ \| \alpha^{k+1}_{k+1} g( q(u_k)) + \alpha^{k+1}_k g(u_k) + ... + \alpha^{k+1}_{k-m}g(u_{k-m}) \|_{V'}  }{ \| g(u_k) \|_{V'} }, \\
 \gamma_{k+1}& := \frac{ \| \alpha^{k+1}_{k+1} g(q(u_k) ) + \alpha^{k+1}_k g(u_k) + ... + \alpha^{k+1}_{k-m}g(u_{k-m}) \|_{V'}  }{ \| g(q(u_k)) \|_{V'} }.
 \end{align*}
Note that both $\theta_{k+1}\le 1$ and $\gamma_{k+1}\le 1$, and are only one in the special cases that the optimization problem returns $(0,1,0,...,0)$ or $(1,0,...,0)$, respectively. The only difference between $\theta_{k+1}$ and $\gamma_{k+1}$ is the denominator in their definitions. We prove the following convergence bounds:

\begin{align}
\| g(u_{k+1}) \|_{V'} & \le \gamma_{k+1} \kappa \| g(u_{k})   \|_{V'} + \mathcal{O}\left( \| g(u_{k})   \|_{V'}^2 + \| g(u_{k-1})   \|_{V'}^2 + ... + \| g(u_{k-m})   \|_{V'}^2\right), \label{result2} \\
\| g(u_{k+1}) \|_{V'} &\le \theta_{k+1} \| g(u_{k})   \|_{V'} + \mathcal{O}\left( \| g(u_{k})   \|_{V'}^2 + \| g(u_{k-1})   \|_{V'}^2 + ... + \| g(u_{k-m})   \|_{V'}^2\right). \label{result1} 
 \end{align}
 
The result \eqref{result2} shows that $\gamma_{k+1}$ scales the linear convergence rate of Picard from $\kappa$ to $\gamma_{k+1}\kappa$.  The definition $\gamma_{k+1}$ shows the `gain of the optimization problem' from the ratio of what the optimization gives to what Picard with no acceleration would give (or equivalently, the optimization problem return $(1,0,...,0)$.  Hence up to higher order terms, \eqref{result2} shows that NGMRES accelerates convergence precisely through the gain of the optimization problem.

The result \eqref{result1} is also quite interesting, and there is no AA-Picard analogue to it.  It shows first that, up to higher order terms, NGMRES reduces the nonlinear residual in the $V'$ norm.  The mechanism underlying NGMRES is that the least-square problem in NGMRES is derived by expanding $g(u_{k+1})$ around $u_k$ and truncating after the first-order term, where a finite-difference approximation is used for the first derivative; see \cite{washio1997krylov}.  Further, since $\kappa$ is typically a bound and not known precisely, $\theta_{k+1}$ gives a more precise estimate of the Lipschitz/contraction number at each iteration.  Indeed, our numerical tests show that once the nonlinear residual gets smaller, $\theta_{k+1}$ is a very sharp estimate of the convergence rate at each iteration.

To our knowledge, the estimates \eqref{result2}-\eqref{result1} are the first convergence results for NGMRES that show {\it how} NGMRES accelerates an iteration.  The result \eqref{result2} is similar at a high level to the proof of acceleration for AA in that it is a single step theory where the gain of the optimization problem drives the acceleration \cite{PR25,PRX19}.  Another similarity to the AA theory of \cite{PRX19} is that the constrained form of the optimization problem is natural for the analysis.  However, the analysis and results herein are fundamentally different from the AA theory for several reasons:   the residual expansion takes a different form since the extrapolation of NGMRES is different than AA, the nonlinear residual is used in NGMRES instead of the Picard fixed point residual, and the optimization norm here is $V'$ while for AA is $V$.  We note that in neither case is $\ell^2$ the best choice, and we show both numerically and analytically {\color{black} that using $\ell^2$ can be a poor choice for NGMRES-Picard, especially in 3D.  Using the $V'$ norm requires an additional Stokes solve at each nonlinear iteration and thus incurs a non-negligible cost compared to using $\ell^2$, however the `extra' cost is modest  (about 16\% in our tests) since it is the same Stokes matrix for each solve and can be pre-constructed and pre-factored.  Since using $V'$ is shown herein to give convergence in many fewer iterations than $\ell^2$ in certain settings, this extra cost is often easily justified.
}

The remainder of this paper is organized as follows. In Section 2 we give notation and mathematical preliminaries, and in Section 3 we {\color{black} prove} the NGMRES convergence result.  In section 4, we give results for several numerical tests that illustrate the theory, show the convergence bound is quite sharp, compare using $V'$ and $\ell^2$ as the least squares optimization norms, and mesh independence of convergence for our acceleration method.  Conclusions and future directions are in Section 5.

\section{Notation and Preliminaries}

Let $\Omega$ be an open connected set in $\mathbb{R}^d$ ($d$=2 or 3). We denote by the $L^2(\Omega)$ inner product and norm as $(\cdot,\cdot)$ and $\|\cdot\|$, respectively.  For other norms, we will label them with subscripts.  

We define the natural pressure and velocity function spaces for the NSE by 
	\begin{align*}
		&Q:=\{q\in {L}^2(\Omega): \int_{\Omega}q\ dx=0\} \quad \text{and}
		& X:=
		\{v\in H^1\left(\Omega\right): v=0~~\text{on}~ \partial\Omega\},
		\end{align*}
		along with the divergence-free velocity space
		\[
		 V:=
		\{v\in X:  (\nabla \cdot v,q)=0\ \forall q\in Q\}.
		\]
The dual space of $X$ is $H^{-1}(\Omega)$ with norm denoted $\| \cdot \|_{-1}$, and the dual space of $V$ is denoted by $V'$ with norm $\| \cdot \|_{V'}$.  We will use the notation $(\cdot,\cdot)$ to also represent dual pairings of $V$ and $X$ with their dual spaces.

We recall the following well known bounds and properties of the NSE nonlinearity form  \cite{Laytonbook}: \\
i) {\color{black}Skew-symmetry: $(v\cdot \nabla w,w)=0$} for all $v\in V$ and $w\in X$,\\
ii) there exists a constant $M$ dependent only on the size of $\Omega$ satisfying for all $v,w,\chi \in X$
\begin{align}
(v\cdot\nabla w,\chi) &\le M \| \nabla v \| \| \nabla w \| \| \nabla \chi \|, \label{bbounds} \\
\| v\cdot\nabla w \|_{V'} &\le M \| \nabla v \| \| \nabla w\|. \label{bbounds2}
\end{align}

The analysis of this paper is done primarily using the space $V$ and its dual space.  Results would be identical if we did the analysis using the discretely divergence-free subspace $V_h$ created from stable velocity-pressure spaces $(X_h,Q_h)\subset (X,Q)$.  The only difference would be that a 
skew-symmetric form of the nonlinear term would need to be used if the finite elements were not pointwise divergence free, e.g. 
\[
b^*(v,w,z) = (v\cdot\nabla w,z) + \frac12 ((\nabla \cdot v)w,z).
\]
One could also use other energy preserving formulations such as rotational and EMAC forms \cite{CHOR17}.

\subsection{Navier-Stokes preliminaries}
We consider the NSE on a domain $\Omega \subset \mathbb{R}^d (d=2,3)$ given by 
%\todo{This is a little bit overlap with \ref{NS1}. We can discuss after we receive the reviews.}
\begin{equation}\label{NS}
		\left\{\begin{aligned}
			-\nu \Delta u+u\cdot\nabla u+ \nabla p&={f} \quad \text{in}~\Omega,\\
			\nabla\cdot {u}&=0\quad \text{in}~\Omega,\\
			{u}&=0 \quad \text{on}~~\partial\Omega,
		\end{aligned}\right.
\end{equation}
where $u$ and $p$ are the unknown fluid velocity and pressure, respectively, $\nu>0$ is the kinematic viscosity,  and ${f}$ is a given external forcing (such as gravity or {\color{black} buoyancy}). The Reynolds number $Re \sim \frac{1}{\nu}$ is a physical constant that describes the complexity of a flow: higher $Re$ is generally associated with more complex physics and non-unique solutions.  For simplicity of analysis, we consider the system \eqref{NS} equipped with homogeneous Dirichlet boundary conditions, but our results are extendable to nonhomogeneous mixed Dirichlet/Neumann boundary conditions {\color{black} as well as} to solving the time dependent NSE at a fixed time step in a time stepping scheme.  

The weak form of \eqref{NS} is given by: Find $u\in V$ satisfying
\begin{equation}
\nu(\nabla u,\nabla v) + (u\cdot\nabla u,v) = (f,v)\ \ \forall v\in V.\label{NSw}
\end{equation}

It is well known that \eqref{NSw} admits weak solutions for any $\nu>0$ and $f\in H^{-1}(\Omega)$ \cite{Laytonbook}, and any such solution is bounded by 
\begin{equation}
\| \nabla u \| \le \nu^{-1} \| f \|_{-1}. \label{nsstab}
\end{equation}
Under a small data condition $\kappa:=M\nu^{-2} \|f \|_{H^{-1}}<1$, solutions are unique \cite{GR86,Laytonbook,temam}.  However, for $\kappa$ large enough, \eqref{NSw} can have multiple solutions \cite{Laytonbook}.

\subsection{Picard and nonlinear residual preliminaries}

The Picard iteration for solving \eqref{NSw} is defined by: given {\color{black} $v_k\in V$, find 
%$\tilde{u}_{k+1}\in V$ 
 $q(v_k)\in V$}
satisfying 
{\color{black}
\begin{equation}
\nu(\nabla {\color{black} q(v_k)},\nabla \chi) + (v_k \cdot\nabla {\color{black} q(v_k)},\chi) = ( f,\chi )\ \forall \chi \in V. \label{NSEPic}
\end{equation}}
It is shown in \cite{PRX19} that the solution operator associated with \eqref{NSEPic} is well-defined and is uniformly bounded:
{\color{black}
\begin{equation}
\| \nabla  q(v_k) \|  \le \nu^{-1} \| f\|_{-1}. \label{Picbound}
\end{equation} } 
Thus, we can define the fixed point Picard iteration as {\color{black} $v_{k+1}=q(v_k)$}, where $q:V\rightarrow V$ is the solution operator to \eqref{NSEPic}.

The fixed point residual satisfies \cite{GR86}
{\color{black}
\begin{equation}
 \| \nabla (q (v_{k})-v_{k} ) \| \le \kappa \| \nabla ( q(v_{k-1})-v_{k-1}) \|. \label{Picresid}
\end{equation}}

Note that if $\kappa<1$, \eqref{Picresid} indicates that the Picard iteration is contractive and will convergence linearly with rate (at least) $\kappa$ to the unique weak NSE solution.  However,  the Picard iteration for the NSE will diverge for $\kappa$ sufficiently large \cite{PR25}.  Our convergence results for NGMRES-Picard when $m=0$ assume only that $\kappa>0$, while for $m\ge 1$ we assume Picard is contractive, i.e. $\kappa<1$.  The difference comes from the additional higher order terms when $m\ge 1$.  In our numerical tests we use $\kappa \gg 1$, i.e. far above the range where Picard is contractive.

In addition to considering Picard residual convergence, we will also consider convergence using the NSE nonlinear residual, which is defined by $g:V \rightarrow V'$ which satisfies for all $v\in V$ and $\chi\in V$, 
\[
( g(v),\chi ) =   \nu (\nabla  v,\nabla \chi) + (v\cdot\nabla v,\chi) - (f,\chi).
\]
The $V'$ norm of $g(v)$ is thus defined by
\begin{equation}\label{eq:def-V'norm}
\|g(v)\|_{V'} = \max_{0\neq \chi \in V} \frac{(g(v),\chi)}{\|\nabla \chi\|}.
\end{equation}
Due to $v\in V$, the $v\cdot\nabla v$ term is not in $L^2(\Omega)$, and thus $V'$ is the appropriate norm to measure $g(v)$.

We now prove a preliminary lemma relating the Picard residual and nonlinear residual, as well as establishing a bound on the nonlinear residual for Picard.  We denote {\color{black}$\tilde v_{k+1}:=q(v_k)$, and} the Picard residual by
{\color{black}
\[
w_{k+1} = \tilde v_{k+1} - v_k = q(v_k) - v_k,
%w_{k+1} =  q(v_k) - v_k,
\]}
and the difference between two successive iterates by
\[
e_k = v_k - v_{k-1}.
\]

\begin{lemma}
The following bounds hold regarding the Picard residual and the NSE nonlinear residual:
\begin{align}
    \| \nabla w_{k+1} \| & \le \nu^{-1} \| g({\color{black}v}
_k) \|_{V'}, \label{wg1} \\
    \| g(\tilde {\color{black}v}_{k+1}) \|_{V'}  & \le \kappa \| g( {\color{black}v}
_{k}) \|_{V'}. \label{wg2}
\end{align}
\end{lemma}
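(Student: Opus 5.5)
For \eqref{wg1}, I will subtract the definition of $g(v_k)$ from the Picard equation \eqref{NSEPic}. For any $\chi\in V$, \eqref{NSEPic} gives
\[
\nu(\nabla q(v_k),\nabla\chi)+(v_k\cdot\nabla q(v_k),\chi)-(f,\chi)=0,
\]
while by definition
\[
(g(v_k),\chi)=\nu(\nabla v_k,\nabla\chi)+(v_k\cdot\nabla v_k,\chi)-(f,\chi).
\]
Subtracting, and using $w_{k+1}=q(v_k)-v_k$, yields
\[
\nu(\nabla w_{k+1},\nabla\chi)+(v_k\cdot\nabla w_{k+1},\chi)=-(g(v_k),\chi)\quad\forall\chi\in V .
\]
Choose $\chi=w_{k+1}\in V$. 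Skew-symmetry (property i), which applies since $v_k\in V$, kills the trilinear term. This leaves $\nu\|\nabla w_{k+1}\|^2\le \|g(v_k)\|_{V'}\|\nabla w_{k+1}\|$ by the definition \eqref{eq:def-V'norm}, and dividing gives \eqref{wg1}.

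For \eqref{wg2}, I will evaluate $g$ at $\tilde v_{k+1}=q(v_k)$ and again use \eqref{NSEPic} to cancel the forcing:
\[
(g(\tilde v_{k+1}),\chi)=\nu(\nabla\tilde v_{k+1},\nabla\chi)+(\tilde v_{k+1}\cdot\nabla\tilde v_{k+1},\chi)-(f,\chi)=((\tilde v_{k+1}-v_k)\cdot\nabla\tilde v_{k+1},\chi)=(w_{k+1}\cdot\nabla\tilde v_{k+1},\chi).
\]
So $g(\tilde v_{k+1})$ is exactly the trilinear term with the Picard residual in the convecting slot. Next I will apply \eqref{bbounds2} (equivalently \eqref{bbounds} followed by the supremum over $\chi$), which gives
\[
\|g(\tilde v_{k+1})\|_{V'}\le M\|\nabla w_{k+1}\|\,\|\nabla\tilde v_{k+1}\|.
\]
The uniform bound \eqref{Picbound} gives $\|\nabla \tilde v_{k+1}\|\le\nu^{-1}\|f\|_{-1}$, and \eqref{wg1} gives $\|\nabla w_{k+1}\|\le\nu^{-1}\|g(v_k)\|_{V'}$. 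Multiplying these yields the factor $M\nu^{-2}\|f\|_{-1}=\kappa$, which is \eqref{wg2}.

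I expect no real obstacle here. The only points needing care are:
\begin{itemize}
\item the test function $w_{k+1}$ must lie in $V$, so that skew-symmetry applies and the $V'$ pairing is legitimate;
\item the identity for $g(\tilde v_{k+1})$ must put $w_{k+1}$ in the first (convecting) argument, so that the stability bound \eqref{Picbound} is applied to the differentiated factor $\tilde v_{k+1}$.
\end{itemize}
Notably, no smallness assumption on $\kappa$ is needed for either inequality.
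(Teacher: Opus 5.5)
Your proposal is correct and follows the paper's proof essentially step for step: you test the difference equation for $w_{k+1}$ with $\chi=w_{k+1}$ and use skew-symmetry to get \eqref{wg1}, then you write $(g(\tilde v_{k+1}),\chi)=(w_{k+1}\cdot\nabla\tilde v_{k+1},\chi)$ and combine \eqref{bbounds2}, \eqref{Picbound} and \eqref{wg1} to get \eqref{wg2}. Your intermediate identity $\nu(\nabla w_{k+1},\nabla\chi)+(v_k\cdot\nabla w_{k+1},\chi)=-(g(v_k),\chi)$ has the correct signs; the paper's displayed middle expression has a harmless sign slip on the $\nu(\nabla v_k,\nabla\chi)$ term.
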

\begin{proof}
Subtract $\nu(\nabla {\color{black}v}
_k,\nabla \chi) + ({\color{black}v}
_k \cdot \nabla {\color{black}v}
_k,\chi)$ from both sides of \eqref{NSEPic} to obtain for all $\chi\in V$,
\[
\nu(\nabla w_{k+1},\nabla \chi) + ({\color{black}v}
_k \cdot\nabla w_{k+1},\chi) = (f,\chi) + \nu (\nabla {\color{black}v}
_k,\nabla \chi) - ({\color{black}v}
_k \cdot \nabla {\color{black}v}
_k,\chi) = (-g({\color{black}v}
_k),\chi).
\]
Now choosing $\chi=w_{k+1}$ vanishes the nonlinear term and gives us
\[
\nu \| \nabla w_{k+1} \|^2 = -(g({\color{black}v}
_k),w_{k+1}) \le \| g({\color{black}v}
_k) \|_{V'} \|\nabla w_{k+1} \|,
\]
where the last step uses the definition of $V'$ norm, \eqref{eq:def-V'norm}, and which after simplifying completes the proof of the first bound of the lemma. 

For the second bound, expand $g(\tilde {\color{black}v}
_{k+1})$ to obtain for all $\chi\in V$,
\begin{align*}
(g(\tilde {\color{black}v}
_{k+1}),\chi) & = \nu(\nabla \tilde {\color{black}v}
_{k+1},\nabla \chi) + ({\color{black}v}
_k \cdot\nabla \tilde {\color{black}v}
_{k+1},\chi) - (f,\chi) \\ 
& = (w_{k+1} \cdot\nabla \tilde {\color{black}v}
_{k+1},\chi)
\end{align*}
with the last step thanks to \eqref{NSEPic}. Taking $V'$ norms of both sides and bounding the nonlinear term we get
\begin{align*}
\| g(\tilde {\color{black}v}
_{k+1}) \|_{V'} 
&\le M \| \nabla w_{k+1} \| \| \nabla \tilde {\color{black}v}
_{k+1} \| \\
&\le M \nu^{-1} \| f \|_{-1} \| \nabla w_{k+1} \|.
\end{align*}
Now combining with \eqref{wg1}, we obtain
\[
\| g(\tilde {\color{black}v}
_{k+1}) \|_{V'}  \le \kappa \| g( {\color{black}v}
_{k}) \|_{V'},
\]
which completes the proof of \eqref{wg2}.
\end{proof}

\subsection{NGMRES preliminaries}\label{sec:NGMRES-prelim} 
Before presenting NGMRES as an accelerate technique for the Picard iteration applied to the NSE, we first outline the NGMRES framework for accelerating a fixed-point iteration $q(u)$  used to solve $g(u)=0$.
 
\begin{alg}[Nonlinear GMRES with depth $m$] \label{alg:NGMRES}
 		\begin{algorithmic}[1]
 			\STATE  Input: $x_0$  and $m\geq0$ with $m\in\mathbb{N}_{\color{black} 0}$
 			\FOR {$k=0,1,\cdots$ until convergence}
 			\STATE Compute 
 			\begin{equation}\label{eq:xkp1-NG} 
 				u_{k+1} = q(u_k) + \sum_{i=0}^{m_k=\min\{k,m\}}\beta_i^{k+1} \left(q(u_k)- u_{k-i} \right),
 			\end{equation}
 			where  $\beta^{k+1}=\big(\beta_0^{k+1},\beta_1^{k+1},\cdots, \beta_{m_k}^{k+1}\big)$ is obtained by solving the  least-squares problem
 			\begin{equation}\label{eq:min-NG} 
 				\min_{\beta^{k+1}} \| g(q(u_k))+\sum_{i=0}^{m_k=\min\{k,m\}} \beta_i^{k+1} \left(g(q(u_k))-g(u_{k-i}) \right)\|^2.
 			\end{equation}
 			\ENDFOR
            \STATE  Output: $u_{k+1}$
 		\end{algorithmic}	 
 	\end{alg}
Note that in \cref{alg:NGMRES}, we formulate the least‑squares problem \eqref{eq:min-NG} in a non-constrained form, and in practice the $\ell^2$ norm is generally used. However, one may use the constrained formulation, as we did in Section \ref{sec: cs-ngmres}, for the sake of simplifying the convergence analysis for NSE. $m$ is a given integer. If $m_k=k$ for all $k$, we refer to the corresponding method as full NGMRES.  When  \cref{alg:NGMRES} is applied to solve the linear system $Au=b$ with $q(u_k)=u_k+(b-Au_k)$,  \cite{greif2026convergence} proves that if GMRES and full NGMRES use the same initial guess and the GMRES residual norm decreases strictly at every iteration, then the iterates generated by GMRES and full NGMRES coincide. Similar results have been extended to preconditioned fixed-point iterations, establishing an equivalence between full NGMRES and preconditioned GMRES under analogous assumptions; see \cite{he2026ngmresprecon}.

For the least-squares problem \eqref{eq:min-NG}, to the best of our knowledge, only the $\ell^2$ norm has been used in the literature. It is therefore natural to ask which norm is most appropriate--either for achieving the best performance or for ensuring that the formulation remains meaningful in the PDE setting. Although NGMRES has demonstrated its efficiency, its application and convergence analysis remain limited, particularly for nonlinear problems. In the next section, we apply NGMRES to accelerate Picard iteration for NSE and provide a convergence analysis of NGMRES in this setting. 

\section{Convergence analysis of NGMRES for NSE}\label{sec: cs-ngmres}

We prove in this section a convergence estimate for NGMRES-Picard which uses the nonlinear residual $g$ in its associated optimization problem.  We note that using instead the Picard residual (i.e., $w_{k+1}(q(u_k))=q(q(u_k))-q(u_k)$) for the optimization problem would create an extra Picard solve at each nonlinear iteration, making NGMRES-Picard twice as expensive at each iteration compared to Picard. Note that in \cite{he2026ngmresprecon}, the use of the underlying fixed-point iteration residual in the optimization problem is proposed. One potential advantage is that the $\ell_2$ norm may be more suitable for the NSE. We leave a detailed investigation of this for future work.

None of the main results require finite dimensionality.  For simplicity we work in $V$ but the same proofs will work in the $V_h$ setting provided the nonlinear term is skew-symmetrized (e.g. through skew-symmetric form, rotational form, or EMAC form \cite{CHOR17,GS98,LMNOR09}).  However, in Section 3.3 where the optimization norm implementation is discussed and in particular whether it is okay to use $\ell^2$ (yes in 2D, no in 3D), the finite dimensional case is considered.

In \cref{ngmresalg}, we present NGMRES to accelerate the Picard iteration for the NSE, where we choose the norm in \eqref{eq:min-NG} as the $V'$ norm defined in \eqref{eq:def-V'norm},  use the constrained form for the least-squares problem in \cref{alg:NGMRES}.

\begin{alg}[NGMRES for NSE] \label{ngmresalg} \ \\
NGMRES for the NSE is defined at step $k+1$ by: \\

\begin{enumerate}
\item Find $\tilde u_{k+1} = q(u_k)$ from \eqref{NSEPic}. 
\item Find the coefficients $(\alpha^{k+1}_{k+1},\alpha^{k+1}_{k},...,\alpha^{k+1}_{k-m})$ that minimize
\[
\min_{\sum_{i={k-m}}^{k+1}\alpha^{k+1}_i=1 } \| \alpha^{k+1}_{k+1} g(\tilde u_{k+1}) + \alpha^{k+1}_k g(u_k)  + ... \alpha^{k+1}_{k-m}g(u_{k-m}) \|_{V'}.
\]
\item Set 
\[
u_{k+1} = \alpha^{k+1}_{k+1} \tilde u_{k+1} + \alpha^{k+1}_k u_k +\alpha^{k+1}_{k-1} u_{k-1} + ... +  \alpha^{k+1}_{k-m} u_{k-m}. 
\]
\end{enumerate}
\end{alg}
The optimization problem plays a crucial role in convergence. We define $\theta_{k+1}$ to be the gain of the optimization problem over the previous iterate and $\gamma_{k+1}$ to be the gain of the optimization problem over Picard, in the following sense:
\begin{align*}
 \theta_{k+1}&:= \frac{ \| \alpha^{k+1}_{k+1} g(\tilde u_{k+1}) + \alpha^{k+1}_k g(u_k) + ... + \alpha^{k+1}_{k-m}g(u_{k-m}) \|_{V'}  }{ \| g(u_k) \|_{V'} },\\
 \gamma_{k+1}&:= \frac{ \| \alpha^{k+1}_{k+1} g(\tilde u_{k+1}) + \alpha^{k+1}_k g(u_k) + ... + \alpha^{k+1}_{k-m}g(u_{k-m}) \|_{V'}  }{ \| g(\tilde u_{k+1}) \|_{V'} }.
 \end{align*}
Note that $\theta_{k+1},\gamma_{k+1}\le 1$ and can only reach 1 if the minimum is achieved when $\alpha^{k+1}=(0,1,0,...,0)$ and $(1,0,...,0)$, respectively.  
We will assume the optimization coefficients are uniformly bounded, i.e. $\max_{j,i} |\alpha_j^i | \le \bar\alpha$. This assumption is equivalent to assuming $\{  g(\tilde u_{k+1}),\  g(u_{k}),...,\  g(u_{k-m})\}$ are linearly independent with respect to the $V'$ inner product, {\color{black} i.e. the least squares problem is well-posed (its matrix has full column rank).  If they are close to $V'$-linearly dependent, then in practice either $m$ can reduced until the least squares system is well-posed, or filtering can be done analogous to what \cite{PR23} does for AA.}

We denote the Picard residual by
\[
w_{k+1} = \tilde u_{k+1} - u_k = q(u_k) - u_k,
\]
and the difference between two successive iterates by
\[
e_k = u_k - u_{k-1}.
\]

\subsection{Convergence of NGMRES-Picard for the base case of $m=0$} \ \\
We now prove a one-step convergence theorem for NGMRES-Picard with $m=0$ for the NSE.  The base case has a simpler convergence proof than the general case, so we believe it is instructive for the reader to do the base case first.
NGMRES-Picard in the base case $m=0$ is defined at step $k+1$ by: \\

\begin{enumerate}
\item Find $\tilde u_{k+1} = q(u_k)$ from \eqref{NSEPic}.
\item Find
\[
\alpha^{k+1} = \arg \min_{\alpha} \| \alpha g(\tilde u_{k+1}) + (1-\alpha) g(u_k) \|_{V'}.
\]
\item Set 
\[
u_{k+1} = \alpha^{k+1} \tilde u_{k+1} + (1-\alpha^{k+1}) u_k.
\]
\end{enumerate}
We will assume that $\max_k | \alpha^k | \le \bar \alpha$ {\color{black} for some $\bar\alpha>0$.}

\begin{theorem} \label{thm0}
Assume that $\max_k | \alpha^k | \le \bar \alpha$. For any $\kappa>0$,  the nonlinear residual for NGMRES-Picard with $m=0$ at step $k+1$ satisfies the bounds
\begin{align*}
\| g(u_{k+1}) \|_{V'}  &\le \theta_{k+1} \| g(u_k) \|_{V'}
+ M \nu^{-2} \bar\alpha(1+\bar\alpha)  \| g(u_k) \|_{V'}^2, \\
\| g(u_{k+1}) \|_{V'}  &\le \gamma_{k+1}\kappa \| g(u_k) \|_{V'}
+ M \nu^{-2} \bar\alpha(1+\bar\alpha)  \| g(u_k) \|_{V'}^2, 
\end{align*}
\end{theorem}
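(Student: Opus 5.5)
Since $u_{k+1}$ is an affine combination of $\tilde u_{k+1}$ and $u_k$, I will expand $g$ along that combination. The quadratic nonlinearity means the expansion is exact at second order. Write $\alpha=\alpha^{k+1}$ and $u_{k+1}=u_k+\alpha w_{k+1}$. For any $\chi\in V$,
\begin{align*}
(g(u_{k+1}),\chi) &= \nu(\nabla(u_k+\alpha w_{k+1}),\nabla\chi) + ((u_k+\alpha w_{k+1})\cdot\nabla(u_k+\alpha w_{k+1}),\chi) - (f,\chi).
\end{align*}
I will compare this with $(\alpha g(\tilde u_{k+1})+(1-\alpha)g(u_k),\chi)$. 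Expanding both sides with $\tilde u_{k+1}=u_k+w_{k+1}$, the linear terms match exactly, and so do the terms in $u_k\cdot\nabla u_k$ and $f$. The only discrepancy comes from the quadratic term $w_{k+1}\cdot\nabla w_{k+1}$: it has coefficient $\alpha^2$ in $g(u_{k+1})$ and coefficient $\alpha$ in $\alpha g(\tilde u_{k+1})$. This gives the key identity
\[
g(u_{k+1}) = \alpha g(\tilde u_{k+1})+(1-\alpha)g(u_k) + (\alpha^2-\alpha)\, w_{k+1}\cdot\nabla w_{k+1}
\]
in $V'$. Verifying this identity carefully, including the cross terms $u_k\cdot\nabla w_{k+1}$ and $w_{k+1}\cdot\nabla u_k$ (which appear with coefficient $\alpha$ on both sides), is the main step. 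It is routine but must be done precisely.

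Next I take $V'$ norms. The first term is exactly $\theta_{k+1}\|g(u_k)\|_{V'}$ by definition of $\theta_{k+1}$, and equally $\gamma_{k+1}\|g(\tilde u_{k+1})\|_{V'}$ by definition of $\gamma_{k+1}$. For the remainder, I use \eqref{bbounds2} and $|\alpha^2-\alpha|\le\bar\alpha(1+\bar\alpha)$:
\[
\|(\alpha^2-\alpha)w_{k+1}\cdot\nabla w_{k+1}\|_{V'}\le \bar\alpha(1+\bar\alpha)M\|\nabla w_{k+1}\|^2 .
\]
Then \eqref{wg1} gives $\|\nabla w_{k+1}\|^2\le\nu^{-2}\|g(u_k)\|_{V'}^2$, which produces exactly the stated quadratic term $M\nu^{-2}\bar\alpha(1+\bar\alpha)\|g(u_k)\|_{V'}^2$.

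For the first bound this finishes the argument. For the second bound I additionally apply \eqref{wg2}, which says $\|g(\tilde u_{k+1})\|_{V'}\le\kappa\|g(u_k)\|_{V'}$. Since \eqref{wg1} and \eqref{wg2} hold for any $\kappa>0$ and no contraction was used, the result holds for any $\kappa>0$, as claimed. The only place requiring care is the exact cancellation in the expansion identity; everything after it is direct bounding.
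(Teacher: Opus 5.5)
Your proof is correct and takes essentially the paper's approach: both rest on the identity $g(u_{k+1}) = \alpha^{k+1} g(\tilde u_{k+1}) + (1-\alpha^{k+1}) g(u_k) - \alpha^{k+1}(1-\alpha^{k+1})\, w_{k+1}\cdot\nabla w_{k+1}$ in $V'$, bounded by the same use of \eqref{bbounds2}, \eqref{wg1} and \eqref{wg2}. The only difference is that you get the identity by exact quadratic expansion of $g$ along $u_k+\alpha w_{k+1}$, which shows it is purely algebraic, whereas the paper adds and subtracts terms and invokes \eqref{NSEPic} twice, and those two uses in effect cancel.
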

where
\begin{align*}
\theta_{k+1}&=\frac{\|\alpha^{k+1} g(\tilde u_{k+1}) + (1-\alpha^{k+1}) g(u_k) \|_{V'}}{\| g(u_k) \|_{V'}},\\
\gamma_{k+1}&=\frac{\|\alpha^{k+1} g(\tilde u_{k+1}) + (1-\alpha^{k+1}) g(u_k) \|_{V'}}{\| g(\tilde u_{k+1}) \|_{V'}}.
\end{align*}

\begin{remark}
The first bound shows the sequence defined by $\{ \| g(u_j)\|_{V'} \}_j$ is locally contractive, with contraction number at each iteration given $\theta_{k+1}$.  The term $\gamma_{k+1}$ represents the acceleration provided by NGMRES-Picard over Picard.
\end{remark}
{\color{black}
\begin{remark}
The higher order terms for the $m=0$ (depth 1) NGMRES-Picard result of the theorem are  quadratic.  This result may indicate that NGMRES is more compatible with general superlinearly convergent iterations compared to AA, which has 
a residual bounds with less than quadratic higher order terms and is known to slow superlinear convergence \cite{PR25,RX23}.  The authors plan to study NGMRES for superlinears iteration in future work.
\end{remark}

}
\begin{proof}
We begin by expanding $g(u_{k+1})$ and adding and subtracting $u_k$ to the first argument of the nonlinear term to get for all $v\in V$,
\begin{align*}
(g(u_{k+1}),v) & = \nu(\nabla u_{k+1},\nabla v) + (u_{k+1}\cdot\nabla u_{k+1},v) - (f,v) \\
& = \nu(\nabla u_{k+1},\nabla v) + (u_{k}\cdot\nabla u_{k+1},v) - (f,v) {\color{black} + (} (u_{k+1}-u_{k})\cdot\nabla u_{k+1},v).
\end{align*}
Next, expand $u_{k+1}=\alpha^{k+1} \tilde u_{k+1} + (1-\alpha^{k+1}) u_k$ in the first and second right hand side terms above to obtain
\begin{align*}
{\color{black}(g(u_{k+1}),v)} 
%& = \alpha^{k+1} \left(  -\nu\Delta \tilde u_{k+1} + u_{k}\cdot\nabla \tilde u_{k+1} - f \right)
%\\ & \ \ \ + (1-\alpha^{k+1}) \left( -\nu\Delta  u_{k} + u_{k}\cdot\nabla  u_{k} - f \right)+ (u_{k+1}-u_{k})\cdot\nabla u_{k+1}  \\
& = \left( (1-\alpha^{k+1}) g(u_k) + (u_{k+1}-u_{k})\cdot\nabla u_{k+1},v\right), 
\end{align*}
where the last step uses the definition of $\tilde u_{k+1}$ (see \eqref{NSEPic}) to vanish the first right hand side term  and the definition of $g(u_k)$.  Next we add and subtract $\alpha^{k+1} g(\tilde u_{k+1})$ to the right hand side, which produces
\begin{multline}    
(g(u_{k+1}),v) = \left( \alpha^{k+1} g(\tilde u_{k+1}) + (1-\alpha^{k+1}) g(u_k),v \right) \\ + ((u_{k+1}-u_{k})\cdot\nabla u_{k+1},v) -  (\alpha^{k+1} g(\tilde u_{k+1}),v). \label{ident1}
\end{multline}

We now rewrite $g(\tilde u_{k+1})$ as
\begin{align*}
  (g(\tilde u_{k+1}),v)
  = & \nu(\nabla \tilde u_{k+1},\nabla v) + (\tilde u_{k+1} \cdot\nabla \tilde u_{k+1},v) - (f,v) \\
  = & \nu(\nabla  \tilde u_{k+1},\nabla v) + (u_{k} \cdot\nabla \tilde u_{k+1},v) - (f,v)  + ( (\tilde u_{k+1}-u_k) \cdot\nabla \tilde u_{k+1},v) \\
 = & ( (\tilde u_{k+1}-u_k) \cdot\nabla \tilde u_{k+1},v)
\end{align*}
for all $v\in V$, where the last step uses the definition of $\tilde u_{k+1}$; see \eqref{NSEPic}.

Again using $u_{k+1}=\alpha^{k+1} \tilde u_{k+1} + (1-\alpha^{k+1}) u_k$ to simplify the last two terms in \eqref{ident1}  produces
\begin{align*}
&( (u_{k+1} -u_{k})\cdot\nabla u_{k+1},v) -  \alpha^{k+1} (g(\tilde u_{k+1}),v) \\
 = & ( (\alpha^{k+1} \tilde u_{k+1} -\alpha^{k+1}u_{k})\cdot\nabla u_{k+1},v)
- \alpha^{k+1} ( (\tilde u_{k+1}-u_k) \cdot\nabla \tilde u_{k+1},v)  \\
 = & \alpha^{k+1}  ( (\tilde u_{k+1}-u_k) \cdot\nabla  (u_{k+1} - \tilde u_{k+1} ),v) \\
 =&  \alpha^{k+1} ( w_{k+1} \cdot\nabla  (\alpha^{k+1} \tilde u_{k+1} + (1-\alpha^{k+1}) u_k - \tilde u_{k+1} ),v) \\
  =& -\alpha^{k+1} (1-\alpha^{k+1}) ( w_{k+1} \cdot\nabla w_{k+1},v) .
\end{align*}
We have thus established the identity
\begin{multline}
(g(u_{k+1}){\color{black},v)}  \\
  (  \left( \alpha^{k+1} g(\tilde u_{k+1})  + (1-\alpha^{k+1}) g(u_k) \right) -\alpha^{k+1} (1-\alpha^{k+1}) w_{k+1} \cdot\nabla w_{k+1}{\color{black},v)}. \label{gwidentity}
\end{multline}
Taking the $\| \cdot \|_{V'}$ norm of both sides, using \eqref{bbounds2}, $| \alpha^{k+1} |\le \bar\alpha$ and reducing yields
\begin{multline}
\| g(u_{k+1}) \|_{V'} \\ \le \|  \alpha^{k+1} g(\tilde u_{k+1}) + (1-\alpha^{k+1}) g(u_k) \|_{V'}
+ {\color{black} M} \bar \alpha (\bar \alpha+1) \cdot \| \nabla w_{k+1} \|^2. \label{m0A1}
\end{multline}
Now using \eqref{wg1} and the definition of $\theta_{k+1}$ in \eqref{m0A1} gives us
\begin{align*}
\| g(u_{k+1}) \|_{V'} 
& \le \theta_{k+1} \| g(u_k) \|_{V'} 
+ M \nu^{-2}  \bar \alpha (\bar \alpha+1)\cdot \| g(u_k) \|_{V'}^2,
\end{align*}
and similarly using \eqref{wg1}, the definition of $\gamma_{k+1}$ together with \eqref{wg2} provides with \eqref{m0A1} 
\begin{align*}
\| g(u_{k+1}) \|_{V'} 
& \le \gamma_{k+1}\kappa \| g(u_k) \|_{V'} 
+ M \nu^{-2}  \bar \alpha (\bar \alpha+1)\cdot \| g(u_k) \|_{V'}^2.
\end{align*}
This completes the proof.
\end{proof}

\subsection{Convergence analysis of NGMRES for NSE for general  $m$}

The convergence proof for general $m$ has the same framework as the $m=0$ case, but is more complex because $u_{k+1}$ is defined as a combination of  $m+2$ terms.  Additionally, to reduce notation, since all optimization coefficients take the form $\alpha^{k+1}_j$ (that is, they all have the $k+1$ superscript), we drop the superscripts in this subsection.

We also prove a preliminary bound for the difference between successive iterates.
\begin{lemma}
Assuming $\kappa<1$, then for any depth $m$, the difference {\color{black} $e_k = u_k - u_{k-1}$} between successive NGMRES for NSE iterates satisfies
\begin{equation}
\| \nabla e_k \| \le \frac{\nu^{-1}}{1-\kappa} \left(  \| g(u_k) \|_{V'} +  \| g(u_{k-1}) \|_{V'} \right),\label{eg1}
\end{equation}
\end{lemma}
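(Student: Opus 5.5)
The plan is to avoid any dependence on the NGMRES combination step and use only the Picard operator $q$, which is defined for arbitrary arguments in $V$. The key is the telescoping identity
\[
e_k = u_k - u_{k-1} = -\big(q(u_k)-u_k\big) + \big(q(u_k) - q(u_{k-1})\big) + \big(q(u_{k-1}) - u_{k-1}\big) = -w_{k+1} + \big(q(u_k)-q(u_{k-1})\big) + w_k .
\]
This holds regardless of how $u_k$ was produced by \cref{ngmresalg}. By the triangle inequality,
\[
\|\nabla e_k\| \le \|\nabla w_{k+1}\| + \|\nabla (q(u_k)-q(u_{k-1}))\| + \|\nabla w_k\| .
\]
The two Picard residuals are handled directly by \eqref{wg1}, applied with $v_k = u_k$ and with $v_k = u_{k-1}$ (that lemma holds for any argument in $V$). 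This gives $\|\nabla w_{k+1}\| \le \nu^{-1}\|g(u_k)\|_{V'}$ and $\|\nabla w_k\| \le \nu^{-1}\|g(u_{k-1})\|_{V'}$.

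The remaining step is a Lipschitz bound for $q$: $\|\nabla(q(u_k)-q(u_{k-1}))\| \le \kappa \|\nabla e_k\|$. This is the standard Girault--Raviart contraction argument behind \eqref{Picresid}, and I would reprove it briefly. Set $\delta = q(u_k)-q(u_{k-1})$ and subtract the two instances of \eqref{NSEPic}. Adding and subtracting $(u_k\cdot\nabla q(u_{k-1}),\chi)$ gives
\[
\nu(\nabla \delta,\nabla\chi) + (u_k\cdot\nabla\delta,\chi) + (e_k\cdot\nabla q(u_{k-1}),\chi) = 0 \quad \forall \chi\in V .
\]
Choose $\chi=\delta$, so the second term vanishes by skew-symmetry since $u_k\in V$. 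Then bound the third term with \eqref{bbounds} and the uniform bound \eqref{Picbound} on $q(u_{k-1})$:
\[
\nu\|\nabla\delta\|^2 \le M\|\nabla e_k\|\,\nu^{-1}\|f\|_{-1}\,\|\nabla\delta\| ,
\]
which yields $\|\nabla\delta\|\le \kappa\|\nabla e_k\|$. The essential point is that \eqref{Picbound} bounds $q(\cdot)$ uniformly, so no bound on the NGMRES iterates $u_k$ themselves is needed. Such a bound would be unavailable without further assumptions, because the coefficients $\alpha_i$ may be negative or large.

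Combining the three estimates gives
\[
\|\nabla e_k\| \le \kappa\|\nabla e_k\| + \nu^{-1}\big(\|g(u_k)\|_{V'}+\|g(u_{k-1})\|_{V'}\big).
\]
Since $\kappa<1$, the $\kappa\|\nabla e_k\|$ term can be absorbed on the left, giving \eqref{eg1}. This absorption is the only place the hypothesis $\kappa<1$ enters. The main obstacle, if any, is arranging the splitting so that the nonlinear term is tested against a function ($q(u_{k-1})$) with an a priori bound. A direct energy estimate on $g(u_k)-g(u_{k-1})$ would instead produce $(e_k\cdot\nabla u_k, e_k)$, which cannot be controlled, and routing the argument through $q$ avoids this.
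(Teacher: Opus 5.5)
Your proof is correct and is essentially the paper's argument. The paper splits $e_k=(u_k-\tilde u_k)+w_k$ and bounds $u_k-\tilde u_k$ with a single energy estimate, whose governing equation is exactly the sum of your equation for $\delta=q(u_k)-q(u_{k-1})$ and the equation for $-w_{k+1}$ from the proof of \eqref{wg1}; inserting $q(u_k)$ as you do therefore gives the same intermediate bound $\|\nabla(u_k-\tilde u_k)\|\le\kappa\|\nabla e_k\|+\nu^{-1}\|g(u_k)\|_{V'}$ and the same absorption step using $\kappa<1$.
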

\begin{proof}
Writing $e_k = u_k - u_{k-1} = u_k - \tilde u_k + \tilde u_k - u_{k-1}$, we have that
\[
e_k = (u_k - \tilde u_k) + w_k,
\]
and hence {\color{black} by the triangle inequality,
\begin{equation}
\| \nabla e_k \| \le \| \nabla (u_k - \tilde u_k) \| + \| \nabla w_k \|. \label{ebound}
\end{equation}
Since $w_k$ is a Picard residual, we have by \eqref{wg1} that
\begin{equation}
    \| \nabla w_k \| \le \nu^{-1} \| g(u_{k-1})\|_{V'}. \label{ebound1a}
\end{equation}
Thus to bound $\| \nabla e_k\|$, it remains only to bound $\| \nabla (u_k - \tilde u_k) \|$.
  Using  the definition of $g(u_k)$ and \eqref{NSEPic} (with input $u_{k-1}$ and solution $\tilde u_k = q(u_{k-1})$ gives the two equations}, for all $v\in V$,
\begin{align*}
\nu(\nabla u_k,\nabla v) + (u_k\cdot\nabla u_k,v)  & = (f,v) + (g(u_k),v), \\
\nu(\nabla \tilde u_k,\nabla v) + (u_{k-1}\cdot\nabla \tilde u_k,v)  & = (f,v). 
\end{align*}
Subtracting the above equations gives {\color{black} for all $v\in V$,}
\[
\nu(\nabla (u_k - \tilde u_k),\nabla v) + (u_k\cdot\nabla (u_k - \tilde u_k),v) + (e_k \cdot\nabla \tilde u_k,v) = (g(u_k),v).
\]
Choosing $v=(u_k - \tilde u_k)$ vanishes the first nonlinear term {\color{black} due to skew-symmetry since $u_{k-1}\in V$,} and gives
\begin{align*}
\nu \| \nabla (u_k - \tilde u_k) \|^2 & = - (e_k \cdot\nabla \tilde u_k,u_k - \tilde u_k) + (g(u_k),u_k - \tilde u_k) \\
& \le M \| \nabla e_k \| \| \nabla \tilde u_k \| \| \nabla (u_k - \tilde u_k) \| + \| g(u_k) \|_{V'}\| \nabla (u_k - \tilde u_k) \|,
\end{align*}
{\color{black}with the last step using Lemma \ref{bbounds}.  After dividing by sides
by $\nu \| \nabla (u_k - \tilde u_k) \|$,} using \eqref{Picbound} {\color{black} to bound $\| \nabla \tilde u_k \|\le \nu^{-1} \| f \|_{-1}$, and recalling $\kappa=M\nu^{-2} \| f\|_{-1}$,} this reduces to 
\[
\| \nabla (u_k - \tilde u_k) \| \le \kappa \| \nabla e_k \| + \nu^{-1}  \| g(u_k) \|_{V'}.
\]
Combining this bound {\color{black} with \eqref{ebound} and \eqref{ebound1a}}, we get the estimate
%\[
%(1-\kappa) \| \nabla e_k \| \le \| \nabla w_k \| +  \nu^{-1}  \| g(u_k) \|_{V'}.
%\]
%Finally, using \eqref{wg1}, we get
\[
\| \nabla e_k \| \le \frac{\nu^{-1}}{1-\kappa} \left(  \| g(u_k) \|_{V'} +  \| g(u_{k-1}) \|_{V'} \right),
\]
which finishes the proof.
\end{proof}

We begin with a lemma for an important $g(u_{k+1})$ identity.

\begin{lemma}\label{glemma}
It holds for all $\chi\in V$ that
\begin{equation}
(g(u_{k+1}),\chi) = \left( \alpha_{k+1}g(\tilde u_{k+1}) + \alpha_{k}g(u_{k}) + ... +  \alpha_{k-m}g(u_{k-m}),\chi \right) + (R,\chi),\label{Glem}
\end{equation}
where
\begin{align*}
(R,\chi)  = & 
\bigg( -(1-\alpha_{k+1}) 
\left(  \alpha_{k+1} w_{k+1} - \alpha_{k-1} e_k -  ... -   \alpha_{k-m} \sum_{j=k-m+1}^k e_j \right) \cdot\nabla w_{k+1} \\
& -\alpha_{k-1} \left(  \alpha_{k+1} w_{k+1} -  \alpha_{k-1} e_k -  ... -   \alpha_{k-m}\sum_{j=k-m+1}^k e_j \right) \cdot\nabla e_{k} 
 - ... \\
& - \alpha_{k-m}\left(  \alpha_{k+1} w_{k+1} -\alpha_{k-1} e_k - ...-  \alpha_{k-m} \sum_{j=k-m+1}^k e_j \right) \\
& \ \ \ \cdot\nabla(e_k + e_{k-1} + ... + e_{k-m+1}) \\
& -\left( \alpha_{k-1} e_k + ... +\alpha_{k-m}\sum_{j=k-m+1}^k e_j \right) \cdot\nabla w_{k+1} \\
&- \left( \alpha_{k-1} e_k + ... +\alpha_{k-m} \sum_{j=k-m+1}^k e_j\right) \cdot\nabla e_k \\
& -  \left( \alpha_{k-2} (e_k + e_{k-1}) + ... +\alpha_{k-m}\sum_{j=k-m+1}^k e_j \right) \cdot\nabla e_{k-1}  
 - ... \\
 &-\alpha_{k-m} \sum_{j=k-m+1}^k e_j\cdot\nabla e_{k-m+1},\chi \bigg).
\end{align*}
\end{lemma}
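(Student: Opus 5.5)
The plan is to repeat the $m=0$ computation that led to \eqref{gwidentity}. First write $u_{k+1}$ relative to $\tilde u_{k+1}$ and $u_k$. Using $\sum_i \alpha_i = 1$ and $u_{k-i} = u_k - \sum_{j=k-i+1}^{k} e_j$, we get
\[
u_{k+1} = u_k + \alpha_{k+1} w_{k+1} - \alpha_{k-1} e_k - \alpha_{k-2}(e_k+e_{k-1}) - \cdots - \alpha_{k-m}\sum_{j=k-m+1}^k e_j .
\]
Write $D := u_{k+1}-u_k$ for this increment. Also set $E_i := \sum_{j=k-i+1}^{k} e_j$, so that $u_{k-i} = u_k - E_i$.

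Next expand $(g(u_{k+1}),\chi)$ exactly as in the proof of Theorem \ref{thm0}. The linear part $\nu(\nabla u_{k+1},\nabla\chi)-(f,\chi)$ splits as the affine combination $\sum_i \alpha_i[\nu(\nabla u_i,\nabla\chi)-(f,\chi)]$, with index $i=k+1$ standing for $\tilde u_{k+1}$. The nonlinear term $(u_{k+1}\cdot\nabla u_{k+1},\chi)$ is bilinear, so I expand it by writing $u_{k+1}=u_k+D$ in the first slot. I then add and subtract, for each $i$, the quadratic term matching $g$ of that iterate. For $\tilde u_{k+1}$, \eqref{NSEPic} gives $g(\tilde u_{k+1}) = w_{k+1}\cdot\nabla\tilde u_{k+1}$ weakly, as in the $m=0$ proof. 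For $u_{k-i}$, $g(u_{k-i})$ contains $u_{k-i}\cdot\nabla u_{k-i}$ directly. The result is
\[
(g(u_{k+1}),\chi) - \sum_i \alpha_i (g(\cdot_i),\chi) = \Big(u_{k+1}\cdot\nabla u_{k+1} - \alpha_{k+1}\, u_k\cdot\nabla\tilde u_{k+1} - \sum_{i=0}^m \alpha_{k-i}\, u_{k-i}\cdot\nabla u_{k-i},\ \chi\Big),
\]
which defines $R$.

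The remaining work is purely algebraic: show this bracket equals the stated expression for $R$. I substitute $\tilde u_{k+1}=u_k+w_{k+1}$ and $u_{k-i}=u_k-E_i$, and expand everything bilinearly about $u_k$. The $u_k\cdot\nabla u_k$ terms cancel because $\sum\alpha_i=1$. The first-order terms $u_k\cdot\nabla(\cdot)$ and $(\cdot)\cdot\nabla u_k$ cancel because $D = \alpha_{k+1}w_{k+1}-\sum_{i\ge1}\alpha_{k-i}E_i$. What is left is quadratic:
\[
R = D\cdot\nabla D - \sum_{i\ge1}\alpha_{k-i}\,E_i\cdot\nabla E_i .
\]
I will double-check this against the $m=0$ case, where it gives $\alpha^2 w\cdot\nabla w$. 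I must then reconcile that with the $-\alpha(1-\alpha)\,w\cdot\nabla w$ in \eqref{gwidentity}. The discrepancy is exactly the $\alpha\, w\cdot\nabla w$ contribution from $\tilde u_{k+1}\cdot\nabla\tilde u_{k+1}$ versus $u_k\cdot\nabla\tilde u_{k+1}$. So I must treat the $\tilde u_{k+1}$ term using $g(\tilde u_{k+1})$ itself, meaning I subtract $\alpha_{k+1}\tilde u_{k+1}\cdot\nabla\tilde u_{k+1}$. This yields
\[
R = D\cdot\nabla D - \alpha_{k+1}\,w_{k+1}\cdot\nabla w_{k+1} - \sum_{i\ge1}\alpha_{k-i}\,E_i\cdot\nabla E_i .
\]

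Finally I regroup to match the stated form. I split $D\cdot\nabla D$ according to the second argument: $\nabla(\alpha_{k+1}w_{k+1})$ and $-\nabla(\alpha_{k-i}E_i)$. I combine $\alpha_{k+1}^2\,w\cdot\nabla w - \alpha_{k+1}\,w\cdot\nabla w$ with the cross terms to produce the $-(1-\alpha_{k+1})\,D\cdot\nabla w_{k+1}$ structure, using $1-\alpha_{k+1}=\sum_{i\ge0}\alpha_{k-i}$. The $\alpha_{k-i}E_i\cdot\nabla E_i$ pieces are rewritten as telescoping sums over $\nabla e_j$, giving the last block of terms. The main obstacle is this bookkeeping: matching signs and the $\alpha_k$ (which carries no $e$ since $E_0=0$) against the paper's grouping. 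I would verify it term by term for $m=1$ before writing the general index pattern.
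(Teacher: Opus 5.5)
Your final argument is correct, but it reaches the identity by a different route than the paper.

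\textbf{The paper's route.} The paper keeps $u_{k+1}$ as the advecting field and distributes only the second slot of the trilinear term over the affine combination. It uses \eqref{NSEPic} twice, once to remove $\nu(\nabla\tilde u_{k+1},\nabla\chi)+(u_k\cdot\nabla\tilde u_{k+1},\chi)-(f,\chi)$ and once to rewrite $g(\tilde u_{k+1})=w_{k+1}\cdot\nabla\tilde u_{k+1}$; the two uses cancel. It then regroups by adding and subtracting until it reaches $D\cdot\nabla(u_{k+1}-\tilde u_{k+1})$ minus the $e$-terms, and substitutes \eqref{GI0}.

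\textbf{Your route.} You expand every quadratic term bilinearly about $u_k$. The constraint $\sum_i\alpha_i=1$ kills the zeroth- and first-order terms, leaving the compact remainder $R=D\cdot\nabla D-\alpha_{k+1}w_{k+1}\cdot\nabla w_{k+1}-\sum_{i\ge1}\alpha_{k-i}E_i\cdot\nabla E_i$.

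This matches the statement. Set $B:=\sum_{i\ge1}\alpha_{k-i}E_i$. After exchanging the order of summation in its final block, the stated $R$ is $-(1-\alpha_{k+1})D\cdot\nabla w_{k+1}-\sum_{i\ge1}\alpha_{k-i}D\cdot\nabla E_i-B\cdot\nabla w_{k+1}-\sum_{i\ge1}\alpha_{k-i}E_i\cdot\nabla E_i$. Its difference from your form is $(D-\alpha_{k+1}w_{k+1}+B)\cdot\nabla w_{k+1}=0$. So the identity you actually need in the regrouping is $\alpha_{k+1}w_{k+1}=D+B$, rather than $1-\alpha_{k+1}=\sum_{i\ge0}\alpha_{k-i}$ as such.

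\textbf{What each route buys.} Yours is more transparent. It shows the lemma is a purely algebraic consequence of the affine constraint and bilinearity, valid for any $\tilde u_{k+1}$; the Picard equation is never needed. The compact form also makes the bound on $\|R\|_{V'}$ in Theorem \ref{thmm} nearly immediate. The paper's route lands directly on the displayed grouping, whose $(1-\alpha_{k+1})$ factor visibly reduces to \eqref{gwidentity}.

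\textbf{Correction.} You should delete your first display rather than patch it. Subtracting $\alpha_{k+1}u_k\cdot\nabla\tilde u_{k+1}$ amounts to eliminating the Picard equation. That display therefore computes $g(u_{k+1})-\sum_{i=0}^m\alpha_{k-i}g(u_{k-i})$, without the $\alpha_{k+1}g(\tilde u_{k+1})$ term.

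For that display the first-order terms do not cancel: a term $\alpha_{k+1}w_{k+1}\cdot\nabla u_k$ survives. So your intermediate claim $R=D\cdot\nabla D-\sum_{i\ge1}\alpha_{k-i}E_i\cdot\nabla E_i$ is wrong. Likewise, the $m=0$ discrepancy is $\alpha_{k+1}w_{k+1}\cdot\nabla\tilde u_{k+1}$, which equals $\alpha_{k+1}g(\tilde u_{k+1})$, not just $\alpha_{k+1}w_{k+1}\cdot\nabla w_{k+1}$.

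Start instead from the following fact, tested against $\chi$: $g(u_{k+1})-\alpha_{k+1}g(\tilde u_{k+1})-\sum_{i=0}^m\alpha_{k-i}g(u_{k-i})$ equals $u_{k+1}\cdot\nabla u_{k+1}-\alpha_{k+1}\tilde u_{k+1}\cdot\nabla\tilde u_{k+1}-\sum_{i=0}^m\alpha_{k-i}u_{k-i}\cdot\nabla u_{k-i}$. This holds because the $\nu$ and $f$ terms split exactly under the affine constraint. With that starting point your argument is clean and self-contained.
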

\begin{proof}
The proof begins with an identity for $u_{k+1} - \tilde u_{k+1}$, which is constructed by expanding $u_{k+1}$ using its definition to obtain
\[
u_{k+1} - \tilde u_{k+1}  = (\alpha_{k+1}-1) \tilde u_{k+1} + \alpha_k u_k + \alpha_{k-1}u_{k-1} + ... + \alpha_{k-m} u_{k-m}.
\]
Writing $\alpha_k = 1-\alpha_{k+1} - \alpha_{k-1} - ... - \alpha_{k-m}$, we have that
\begin{align}
u_{k+1} - \tilde u_{k+1}  & = (\alpha_{k+1}-1) (\tilde u_{k+1} -u_k)  + \alpha_{k-1}(u_{k-1}-u_k) + ... + \alpha_{k-m} (u_{k-m}-u_k) \nonumber \\
& = -(1-\alpha_{k+1}) w_{k+1} - \alpha_{k-1} e_k - ... - \alpha_{k-m}(e_k + e_{k-1} + ... + e_{k-m+1}). \label{GI0}
\end{align}

Next, we expand $g(u_{k+1})$ by using the definition of $g$, then use the definition of $u_{k+1}$ and finally regrouping terms and using \eqref{NSEPic} to get for all $\chi\in V$,
\begin{align*}
(g(u_{k+1}),\chi) 
& = \nu(\nabla  u_{k+1},\nabla \chi) + (u_{k+1}\cdot\nabla u_{k+1},\chi) - ( f ,\chi) \\
&= \alpha_{k+1}\left( \nu(\nabla \tilde u_{k+1},\nabla \chi) + (u_{k+1}\cdot\nabla \tilde u_{k+1},\chi) - (f,\chi) \right)  \\
& \ \ \ + \alpha_k \left( \nu(\nabla u_{k},\nabla \chi) + (u_{k+1}\cdot\nabla u_{k},\chi) - (f,\chi) \right) \\
& \ \ \ + ... + \alpha_{k-m} \left( \nu (\nabla  u_{k-m},\nabla \chi) + (u_{k+1}\cdot\nabla u_{k-m} ,\chi) - (f,\chi) \right) \\
& = \left( \alpha_{k+1}g(\tilde u_{k+1}) + \alpha_{k}g(u_{k}) + ... +  \alpha_{k-m}g(u_{k-m}),\chi \right)-  \alpha_{k+1}(g(\tilde u_{k+1}),\chi) \\
& \ \ \ + \alpha_{k+1} ((u_{k+1}-u_k)\cdot\nabla \tilde u_{k+1} ,\chi)
+ \alpha_k ( (u_{k+1}-u_k)\cdot\nabla u_{k},\chi) \\
& \ \ \ + ...
+ \alpha_{k-m} ( (u_{k+1}-u_{k-m})\cdot\nabla u_{k-m},\chi) ,
\end{align*}
where in the last step we added and subtracted $\alpha_{k+1}g(\tilde u_{k+1})$.  Since by \eqref{NSEPic}
\[
(g(\tilde u_{k+1}),\chi) = (\nu(\nabla \tilde u_{k+1},\nabla \chi) + (\tilde u_{k+1}\cdot\nabla \tilde u_{k+1},\chi) - (f,\chi)) = ( (\tilde u_{k+1} - u_k)\cdot \nabla \tilde u_{k+1},\chi),
\]
we have that
\begin{equation}
(g(u_{k+1}),\chi) = \left( \alpha_{k+1}g(\tilde u_{k+1}) + \alpha_{k}g(u_{k}) + ... +  \alpha_{k-m}g(u_{k-m}),\chi \right) + (R,\chi),\label{GI1}
\end{equation}
where
\begin{multline*}
(R,\chi) =  \alpha_{k+1} ((u_{k+1}-u_k)\cdot\nabla \tilde u_{k+1},\chi) 
+ \alpha_k ((u_{k+1}-u_k)\cdot\nabla u_{k},\chi)\\
+ ...
+ \alpha_{k-m} ( (u_{k+1}-u_{k-m})\cdot\nabla u_{k-m},\chi) - \alpha_{k+1} ((\tilde u_{k+1} - u_k)\cdot \nabla \tilde u_{k+1},\chi).
\end{multline*}
Next, we combine terms in $R$ (except the last one) by using the definition of $u_{k+1}$ and adding and subtracting appropriately via
\begin{align*}
&\big( \alpha_{k+1}  (u_{k+1}-u_k)\cdot\nabla \tilde u_{k+1} 
+ \alpha_k (u_{k+1}-u_k)\cdot\nabla u_{k}
+ ...
\\ & \ \ \ \ \ + \alpha_{k-m} (u_{k+1}-u_{k-m})\cdot\nabla  u_{k-m} ,\chi \big)\\
 =& \big( \alpha_{k+1}  (u_{k+1}-u_k)\cdot\nabla \tilde u_{k+1} 
+ \alpha_k (u_{k+1}-u_k)\cdot\nabla u_{k}
+ ...
\\
& \ \ \ \ \ + \alpha_{k-m} (u_{k+1}-u_{k})\cdot\nabla  u_{k-m} \\
& \ \ 
+ \alpha_{k-1} (u_{k}-u_{k-1})\cdot\nabla  u_{k-1}
+ ...
+ \alpha_{k-m} (u_{k}-u_{k-m})\cdot\nabla  u_{k-m} ,\chi \big) \\
 =& \big( (u_{k+1} - u_{k}) \cdot\nabla u_{k+1}  + \alpha_{k-1} (u_{k}-u_{k-1})\cdot\nabla  u_{k-1}
+ ...\\
&\ \  + \alpha_{k-m} (u_{k}-u_{k-m})\cdot\nabla  u_{k-m},\chi \big).
\end{align*}
Now by expanding the first term of the last line, we can write
\begin{align*}
(R,\chi) &= \big( ( \alpha_{k+1}\tilde u_{k+1} + (\alpha_k-1) u_k + ... + \alpha_{k-m}u_{k-m} ) \cdot\nabla u_{k+1} \\  
  & \ \ \ + \alpha_{k-1} (u_{k}-u_{k-1})\cdot\nabla  u_{k-2}+\alpha_{k-2} (u_{k}-u_{k-2})\cdot\nabla  u_{k-2}
 + ...\\
& \ \ \ + \alpha_{k-m} (u_{k}-u_{k-m})\cdot\nabla  u_{k-m}
- \alpha_{k+1} (\tilde u_{k+1} - u_k)\cdot \nabla \tilde u_{k+1},\chi \big),
\end{align*}
and since $\alpha_k - 1 = -\alpha_{k+1} - \alpha_{k-1} - ... - \alpha_{k-m}$, 
\begin{align*}
(R,\chi) =& \big( ( \alpha_{k+1}(\tilde u_{k+1}-u_k) + \alpha_{k-1}(u_{k-1}-u_k)+ ... + \alpha_{k-m}(u_{k-m}-u_k )) \cdot\nabla u_{k+1} \\
&+ \alpha_{k-1} (u_{k}-u_{k-1})\cdot\nabla  u_{k-1}
+ ...\\
&+ \alpha_{k-m} (u_{k}-u_{k-m})\cdot\nabla  u_{k-m}
- \alpha_{k+1} (\tilde u_{k+1} - u_k)\cdot \nabla \tilde u_{k+1} ,\chi \big).
\end{align*}
Combining the first and last terms now gives us
\begin{multline*}
(R,\chi) = \\ \big( \alpha_{k+1} w_{k+1}\cdot\nabla (u_{k+1} - \tilde u_{k+1}) - (\alpha_{k-1}e_k + ... + \alpha_{k-m}(e_k + e_{k-1} ... + e_{k-m+1}) ) \cdot\nabla u_{k+1} \\ 
+ \alpha_{k-1} e_k \cdot\nabla  u_{k-1}
+ ...
+ \alpha_{k-m} (e_k + e_{k-1} + ... + e_{k-m+1})\cdot\nabla  u_{k-m},\chi \big),
\end{multline*}
which after regrouping terms becomes
\begin{multline*}
(R,\chi) = \big( \alpha_{k+1} w_{k+1}\cdot\nabla (u_{k+1} - \tilde u_{k+1}) 
 - \alpha_{k-1} e_k \cdot\nabla (u_{k+1} - u_{k-1}) \\
 - ... -  \alpha_{k-m} (e_k + e_{k-1}  + ... + e_{k-m+1})\cdot\nabla (u_{k+1} - u_{k-m}),\chi \big).
\end{multline*}
We now add and subtract to the $(u_{k+1} - u_{k-j})$ terms via
\[
u_{k+1} - u_{k-j} = (u_{k+1} - \tilde u_{k+1}) + w_{k+1} + e_k + ... + e_{k-j+1},
\]
which implies
\begin{multline*}
(R,\chi) = \\
\big( \alpha_{k+1} w_{k+1}\cdot\nabla (u_{k+1} - \tilde u_{k+1}) 
 - \alpha_{k-1} e_k \cdot\nabla \left( (u_{k+1} - \tilde u_{k+1}) + w_{k+1} + e_k \right)
 - ... \\ - \alpha_{k-m} (e_k + e_{k-1}  + ... + e_{k-m+1})\cdot\nabla \left( (u_{k+1} - \tilde u_{k+1}) + w_{k+1} + e_k + ... + e_{k-m+1} \right) ,\chi \big),
\end{multline*}
and thus
\begin{multline*}
(R,\chi) = \\ \bigg( \left(  \alpha_{k+1} w_{k+1} -\alpha_{k-1} e_k - ... - \alpha_{k-m} (e_k + e_{k-1}  + ... + e_{k-m+1})\right) \cdot\nabla (u_{k+1} - \tilde u_{k+1}) \\
-\left( \alpha_{k-1} e_k + ... +\alpha_{k-m} (e_k + e_{k-1}  + ... + e_{k-m+1}) \right) \cdot\nabla w_{k+1} \\
- \left( \alpha_{k-1} e_k + ... +\alpha_{k-m} (e_k + e_{k-1}  + ... + e_{k-m+1}) \right) \cdot\nabla e_k \\
- \left( \alpha_{k-2} (e_k + e_{k-1}) + ... +\alpha_{k-m} (e_k + e_{k-1}  + ... + e_{k-m+1}) \right) \cdot\nabla e_{k-1} \\ 
- ...  -\alpha_{k-m} (e_k + e_{k-1}  + ... + e_{k-m+1})\cdot\nabla e_{k-m+1},\chi \bigg).
\end{multline*}
Finally, using \eqref{GI0}, we have that
\begin{align*}
&(R,\chi)  =  \bigg(  \\
& -(1-\alpha_{k+1}) \left(  \alpha_{k+1} w_{k+1} - \alpha_{k-1} e_k - ... -  \alpha_{k-m} (e_k + e_{k-1}  + ... + e_{k-m+1})\right) \cdot\nabla w_{k+1} \\
& -\alpha_{k-1} \left(  \alpha_{k+1} w_{k+1} - \alpha_{k-1} e_k - ... - \alpha_{k-m} (e_k + e_{k-1}  + ... + e_{k-m+1})\right) \cdot\nabla e_{k} \\
& - ... \\
& - \alpha_{k-m}\left(  \alpha_{k+1} w_{k+1} + \alpha_{k-1} e_k + ... +  \alpha_{k-m} (e_k + e_{k-1}  + ... + e_{k-m+1})\right) \\
& \ \ \ \ \cdot\nabla(e_k + e_{k-1} + ... + e_{k-m+1}) \\
& - \left( \alpha_{k-1} e_k + ... +\alpha_{k-m} (e_k + e_{k-1}  + ... + e_{k-m+1}) \right) \cdot\nabla w_{k+1} \\
&-  \left( \alpha_{k-1} e_k + ... +\alpha_{k-m} (e_k + e_{k-1}  + ... + e_{k-m+1}) \right) \cdot\nabla e_k \\
& -  \left( \alpha_{k-2} (e_k + e_{k-1}) + ... +\alpha_{k-m} (e_k + e_{k-1}  + ... + e_{k-m+1}) \right) \cdot\nabla e_{k-1} \\ 
& - ...  - \alpha_{k-m} (e_k + e_{k-1}  + ... + e_{k-m+1})\cdot\nabla e_{k-m+1},\chi \bigg).
\end{align*}

\end{proof} 

We can now prove a convergence result for NGMRES with general depth $m$.
\begin{theorem}\label{thmm}
Assuming $\kappa<1$ 
and $\max_{k,j} | \alpha^k_j | \le \bar \alpha$, for any $m\ge 1$ we have that
\begin{align*}
    \| g(u_{k+1}) \|_{V'} \le \theta_{k+1} \| g(u_{k})   \|_{V'} + \mathcal{O}\left( \| g(u_{k})   \|_{V'}^2 + \| g(u_{k-1})   \|_{V'}^2 + ... + \| g(u_{k-m})   \|_{V'}^2\right), \\
    \| g(u_{k+1}) \|_{V'} \le \gamma_{k+1} \kappa \| g(u_{k})   \|_{V'} + \mathcal{O}\left( \| g(u_{k})   \|_{V'}^2 + \| g(u_{k-1})   \|_{V'}^2 + ... + \| g(u_{k-m})   \|_{V'}^2\right).
\end{align*}

\end{theorem}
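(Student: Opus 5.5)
The proof follows the $m=0$ template. Everything rests on the identity of Lemma \ref{glemma}, so what remains is to bound the remainder $R$ in $V'$ and then insert the definitions of $\theta_{k+1}$ and $\gamma_{k+1}$. Taking the $V'$ norm of both sides of \eqref{Glem} and using the triangle inequality gives
\[
\| g(u_{k+1}) \|_{V'} \le \| \alpha_{k+1}g(\tilde u_{k+1}) + \alpha_k g(u_k)+\dots+\alpha_{k-m}g(u_{k-m})\|_{V'} + \|R\|_{V'} .
\]
By definition, the first term equals $\theta_{k+1}\|g(u_k)\|_{V'}$. It also equals $\gamma_{k+1}\|g(\tilde u_{k+1})\|_{V'}$, which by \eqref{wg2} is at most $\gamma_{k+1}\kappa\|g(u_k)\|_{V'}$. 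So both claimed inequalities follow once we show $\|R\|_{V'}=\mathcal{O}\big(\sum_{j=0}^{m}\|g(u_{k-j})\|_{V'}^2\big)$.

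To bound $R$, note that every term in the expression of Lemma \ref{glemma} has the form $a\cdot\nabla b$. Here $a$ and $b$ are linear combinations, with coefficients that are products of at most two $\alpha$'s, of $w_{k+1}, e_k, e_{k-1},\dots,e_{k-m+1}$. Apply \eqref{bbounds2} to each such term. With $|\alpha_j|\le\bar\alpha$, this gives
\[
\|R\|_{V'} \le C(m,\bar\alpha)\, M \Big(\|\nabla w_{k+1}\| + \sum_{j=k-m+1}^{k}\|\nabla e_j\|\Big)^2 ,
\]
where $C(m,\bar\alpha)$ is a polynomial in $\bar\alpha$ (degree at most two) times a combinatorial factor in $m$ coming from the sums $\sum e_j$. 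Then apply \eqref{wg1} to get $\|\nabla w_{k+1}\|\le\nu^{-1}\|g(u_k)\|_{V'}$. Apply \eqref{eg1} to each $e_j$ with $k-m+1\le j\le k$; this requires $\kappa<1$, which is exactly why that hypothesis appears. It gives $\|\nabla e_j\|\le \frac{\nu^{-1}}{1-\kappa}(\|g(u_j)\|_{V'}+\|g(u_{j-1})\|_{V'})$. The indices $j$ and $j-1$ then range over $k-m,\dots,k$. Finally, $(\sum_{i=1}^{n} a_i)^2\le n\sum_{i=1}^{n} a_i^2$ turns the squared sum into $\sum_{j=0}^m\|g(u_{k-j})\|_{V'}^2$. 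The constant is $C(m,\bar\alpha)M\nu^{-2}(1-\kappa)^{-2}$ up to a factor of order $m$.

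The main obstacle is the bookkeeping in $R$. We must check that every term is genuinely quadratic in the small quantities $w_{k+1}$ and $e_j$, that is, that no term pairs a small quantity with $\nabla u_{k+1}$ or $\nabla\tilde u_{k+1}$, which are $\mathcal{O}(1)$. Lemma \ref{glemma} was organized exactly so that this holds, since $u_{k+1}-\tilde u_{k+1}$ was replaced using \eqref{GI0}. So the step reduces to counting terms and tracking the $\bar\alpha$ and $m$ dependence of the constant. We don't need it explicitly, because the theorem is stated with $\mathcal{O}(\cdot)$.
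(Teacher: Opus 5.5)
Your proposal is correct and follows the paper's proof essentially step for step: take $V'$ norms in the identity of Lemma \ref{glemma}, read off $\theta_{k+1}$ directly and $\gamma_{k+1}\kappa$ via \eqref{wg2}, then bound every quadratic term of $R$ using \eqref{bbounds2}, \eqref{wg1} and \eqref{eg1}, which is where $\kappa<1$ enters. The only difference is cosmetic: the paper bounds the four term types $w_{k+1}\cdot\nabla w_{k+1}$, $e_j\cdot\nabla w_{k+1}$, $w_{k+1}\cdot\nabla e_j$, $e_j\cdot\nabla e_i$ separately and leaves the cross products $\|g(u_k)\|_{V'}\|g(u_j)\|_{V'}$ to be ``reduced,'' whereas you collapse everything into one squared sum and apply $(\sum a_i)^2\le n\sum a_i^2$.
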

\begin{remark}
{\color{black} The assumption $\kappa<1$ is needed in the proof to bound terms of the form $\| \nabla e_j \|$ using \eqref{eg1}.  For the $m=0$ result, no such terms arise due to the extrapolation used by NGMRES.   The condition $\kappa<1$ is well known as a sufficient condition for steady NSE well-posedness and for $\kappa$ large enough it is known that NSE solutions need not be unique \cite{G94,Laytonbook}, and also the condition was assumed in the AA-Picard proof of acceleration in \cite{PRX19}.  Our numerical tests use $\kappa<1$ but also $\kappa \gg 1$, and NGMRES enables convergence in many of the $\kappa\gg 1$ tests.  Hence we believe that a convergence result similar to Theorem \ref{thmm} is likely true for $\kappa>1$, perhaps with worse bounds in the higher order terms, but we have not (yet) discovered another way to bound the $\| \nabla e_j \|$ terms without the $\kappa<1$ assumption.}
\end{remark}

\begin{remark}
The same arguments from the $m=0$ case apply here regarding using $\ell^2$ or $L^2$ as the optimization norm instead of $V'$.  That is, in 2D this is fine but in 3D it is generally not a good choice. {\color{black}Further discussion on the effects of using these optimization norms can be found below in Section 3.3.}
\end{remark}

\begin{proof}
Taking $V'$ norms of both sides in \eqref{Glem}, we obtain
\[
\| g(u_{k+1}) \|_{V'} \le \| \alpha_{k+1}g(\tilde u_{k+1}) + \alpha_{k}g(u_{k}) + ... +  \alpha_{k-m}g(u_{k-m})  \|_{V'} + \| R \|_{V'},
\]
and thus using the definition of $\theta_{k+1}$ we have that
\[
\| g(u_{k+1}) \|_{V'} \le \theta_{k+1} \| g(u_{k})   \|_{V'} + \| R \|_{V'},
\]
and similarly using \eqref{wg2} and the definition of $\gamma_{k+1}$,
\[
\| g(u_{k+1}) \|_{V'} \le \gamma_{k+1} \kappa \| g(u_{k})   \|_{V'} + \| R \|_{V'},
\]
Thus it remains to bound $R$.  We first note that each term of $R$ takes one the forms: $w_{k+1}\cdot\nabla w_{k+1}$, $e_j\cdot\nabla w_{k+1}$ ( $k-m+1 \le j \le k$), $w_{k+1} \cdot\nabla e_j$ ( $k-m+1 \le j \le k$), or $e_j\cdot\nabla e_i$ ( $k-m+1 \le j,i \le k$).  Terms of these forms are bounded using Lemma \ref{bbounds}, \eqref{wg1} and \eqref{eg1}:
\begin{align*}
\| w_{k+1} \cdot\nabla w_{k+1} \|_{V'} & \le M \| \nabla w_{k+1} \|^2 \le M \nu^{-2} \| g(u_k) \|_{V'}^2,\\
\| e_j\cdot\nabla w_{k+1}  \|_{V'} & \le M  \| \nabla e_j \| \| \nabla w_{k+1} \| \\
& \le   \frac{M \nu^{-2}}{1-\kappa} \| g(u_k) \|_{V'}  \left(  \| g(u_j) \|_{V'} +  \| g(u_{j-1}) \|_{V'} \right),
\\
\| w_{k+1}\cdot\nabla e_j  \|_{V'} & \le M  \| \nabla w_{k+1} \|  \| \nabla e_j \| \\ & \le   \frac{M \nu^{-2}}{1-\kappa} \| g(u_k) \|_{V'}  \left(  \| g(u_j) \|_{V'} +  \| g(u_{j-1}) \|_{V'} \right),
\\
\| e_j \cdot\nabla e_{i} \|_{V'} &  
\le M \| \nabla e_i \| \| \nabla e_j \| \\ 
& \le \frac{M\nu^{-2}}{(1-\kappa)^2}  \left(  \| g(u_i) \|_{V'} +  \| g(u_{i-1}) \|_{V'} \right)\left(  \| g(u_j) \|_{V'} +  \| g(u_{j-1}) \|_{V'} \right).
\end{align*}
Hence we can bound $R$ via
\begin{multline*}
\| R\|_{V'} \le 
\bar\alpha (1+\bar\alpha) M \nu^{-2} \| g(u_k) \|_{V'}^2
+ C(m) \bar\alpha^2 \frac{M \nu^{-2}}{1-\kappa} \| g(u_k) \|_{V'}  \sum_{j=k-m}^k  \| g(u_j) \|_{V'} \\
+ C(m) \bar\alpha^2 \frac{M \nu^{-2}}{(1-\kappa)^2}   \sum_{j=k-m}^k  \| g(u_j) \|_{V'}^2, 
\end{multline*}
where $C(m)$ is a constant depending quadratically on $m$.  Reducing completes the proof.  
\end{proof}

\subsection{More on the choice of the optimization norm}\label{sec:opt-norm}

The theory above relies on the choice of optimization norm being $V'$, which is due to the function $g(v)$ being in $V'$ and not $L^2$ (or $\ell^2$) due to the $v\cdot\nabla v$ term with $v\in V$. Note that the space the nonlinear residual $g(v)$ resides in is completely problem dependent, and different PDEs with different nonlinearity structures would have different `best' optimization norms and for other non-PDE problems the choice of $\ell^2$ may well be the best option.  

Our numerical experiments show that for NGMRES-Picard for NSE, using $V'$ optimization norm gives better convergence than $\ell^2$, especially in 3D, and we give now some explanation for this.  Note that for solving PDEs, $\ell^2$ is typically not the norm that matches the theory.  However, $\ell^2$ is closely related to $L^2(\Omega)$, which is a commonly used Hilbert space norm in PDE theory, and using the $\ell^2$ norm in place of the $L^2$ is reasonable in many situations (see \cite{hawkins2025choice} for some notable exceptions).  We discuss below how $L^2$ (and by extension $\ell^2$) is a reasonable choice for NGMRES-Picard in 2D, but not in 3D.

Consider the discrete setting, e.g. working in the weakly divergence-free space $V_h$ instead of $V$. The proof of Theorem \ref{thm0} does not change until \eqref{m0A1}.  Here, the problem term is $w^h_{k+1}\cdot\nabla w^{h}_{k+1}$ (with $h$ denoting discrete variables), we have from the inverse inequality \cite{BS08} that with the $L^2$ norm,
\begin{align*}
(2D): \| w_{k+1}^h \cdot\nabla w_{k+1}^h \| & \le C (\log(h^{-1}))^{1/2} \| \nabla w^h_{k+1} \|^2, \\
(3D): \| w_{k+1}^h \cdot\nabla w_{k+1}^h \| & \le C h^{-1/2} \| \nabla w^h_{k+1} \|^2.
\end{align*}
This leads to the estimates: in 2D, 
\begin{align*}
\| g(u_{k+1}^h) \| 
& \le \theta_{k+1}^{L^2} \| g(u_k^h) \|
+ C (\log(h^{-1})^{1/2} M \nu^{-2} | \bar \alpha| (|\bar \alpha|+1)\cdot \| g(u_k^h) \|_{V'}^2, \\
\| g(u_{k+1}^h) \|
& \le \gamma_{k+1}^{L^2}\kappa \| g(u_k^h) \| 
+ C (\log(h^{-1})^{1/2} M \nu^{-2} | \bar \alpha| (|\bar \alpha|+1)\cdot \| g(u_k^h) \|_{V'}^2.
\end{align*}
and in 3D,
\begin{align*}
\| g(u_{k+1}^h) \| 
& \le \theta_{k+1}^{L^2} \| g(u_k^h) \|
+ C h^{-1/2} M \nu^{-2} | \bar \alpha| (|\bar \alpha|+1)\cdot \| g(u_k^h) \|_{V'}^2, \\
\| g(u_{k+1}^h) \|
& \le \gamma_{k+1}^{L^2}\kappa \| g(u_k^h) \| 
+ C h^{-1/2} M \nu^{-2} | \bar \alpha| (|\bar \alpha|+1)\cdot \| g(u_k^h) \|_{V'}^2,
\end{align*}
noting that {\color{black} $\theta_{k+1}^{L^2}$} and $\gamma_{k+1}^{L^2}$ change since the $L^2$ norm is now assumed for the optimization problem.

While this discussion is for Theorem \ref{thm0}, the same terms (and more) arise in the proof of the Theorem \ref{thmm}, and so the discussion applies to the general $m$ results as well.  Hence we conclude that for 2D, using the $L^2$ norm is likely to be a good choice even for very small $h$ that will provide similar acceleration as when using $V'$.  However, in 3D, the $h^{-1/2}$ can be a large constant that makes the higher order terms large enough to dominate the linear terms and even prevent convergence.  Indeed, our numerical results show results using $\ell^2$ in 2D are similar to those using $V'$, but in 3D our results using $\ell^2$ are much worse in all cases than those found using $V'$.

\section{Numerical Experiments}
In this section we give results of several numerical tests that both illustrate our theory and test various properties of the method.  In all tests we use $(X_h,Q_h)=(P_r,P_{r-1}^{disc})$ Scott-Vogelius (SV) elements, with $r=d$. The meshes are constructed as barycenter refined triangle/tetrahedral meshes, and these elements are known to be inf-sup stable on such meshes \cite{arnold:qin:scott:vogelius:2D,GS19,JLMNR17,scott:vogelius:conforming, zhang:scott:vogelius:3D}.  All tests use an initial guess for velocity as zero in the interior but satisfying the boundary conditions.  To solve the linear systems that arise at each nonlinear iteration, we follow \cite{HR13,benzi,BL12} by using grad-div stabilization (which does not affect the solution since incompressibility is enforced strongly) and using an approximate block LU factorization as a preconditioner that approximates the Schur complement with the pressure mass matrix.  Inner solves are done with direct solvers.

This section will i) illustrate the convergence theory by calculating $\theta_k$ for several computations and comparing it to the ratio of successive $\| g(u_k)\|_{V'}$, ii) show that changing $m$ from small at early iterations to large at later iterations can be a good strategy, iii) show that using the $V'$ norm in the numerical tests to match the theory gives better results than using $\ell^2$, and iv) show that the convergence properties are mesh independent.  This will be done by running tests on multiple test problems with varying $m$, $Re$ and $h$.  For convergence, we plot the nonlinear residual $\| g(u_k)\|_{V'}$ vs iteration $k$, since this is what the theory gives results for. We also plotted the Picard fixed point residual $\| \nabla ( q(u_k) - u_k)\|$ vs iteration $k$, and found it always had the same convergence pattern as $\| g(u_k)\|_{V'}$ but values were larger (proportional to $Re$). %but was approximately two orders of magnitude (or so) larger.  
This relationship between $\| g(u_k)\|_{V'}$ and $\| \nabla ( q(u_k) - u_k)\|$ is expected due to Lemma {\color{black} 2.1}, and plots of convergence for $\| \nabla ( q(u_k) - u_k)\|$ are not given in the text. {\color{black} Additionally, convergence plots for $\|g(u_k)\|_{\ell^2}$ were also calculated, and had the exact same behavior as for $\| g(u_k)\|_{V'}$ except shifted up by roughly an order of magnitude (plots omitted).}

\subsection{Test Problem 1: 2D driven cavity}

\begin{figure}[ht]
\center
 $Re=5000$ \hspace{1in}  $Re=10000$ \\
\includegraphics[width = .3\textwidth, height=.28\textwidth,viewport=115 45 465 390, clip]{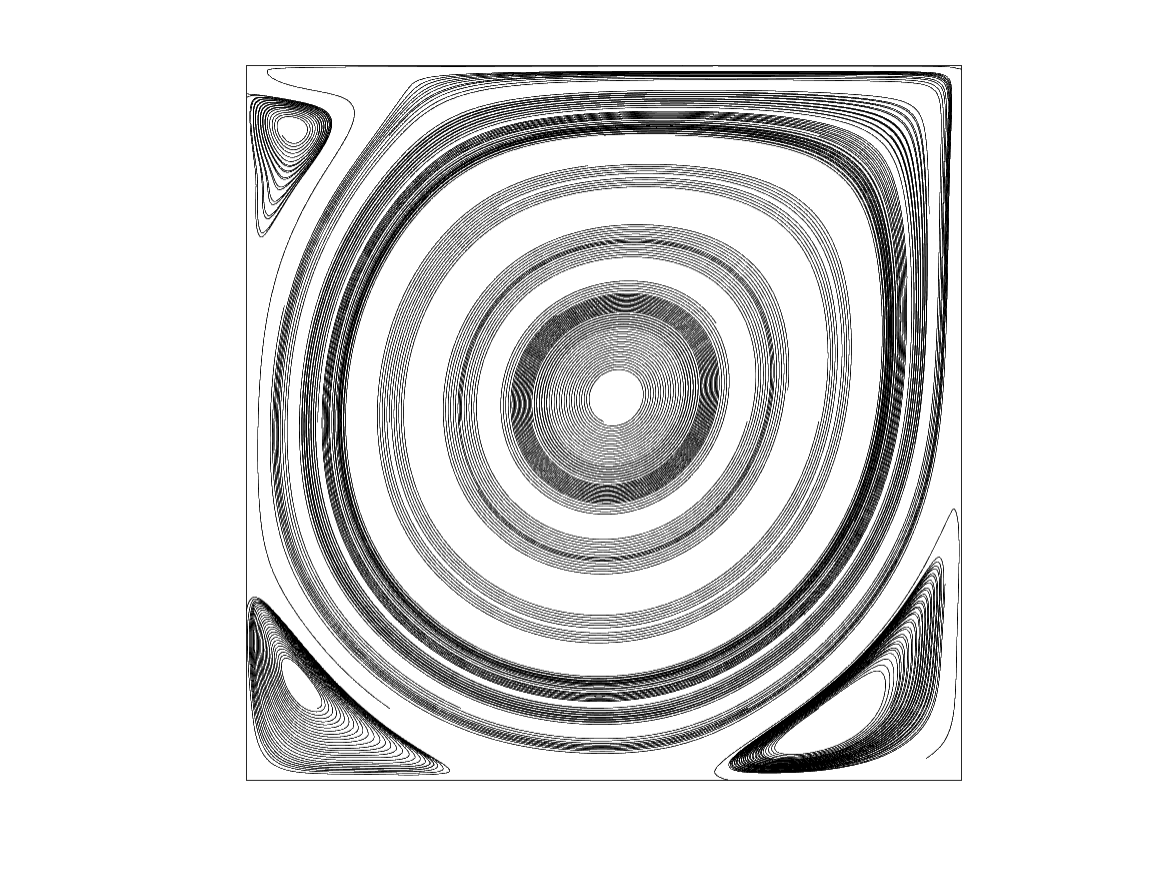}  
\includegraphics[width = .3\textwidth, height=.28\textwidth,viewport=115 45 465 390, clip]{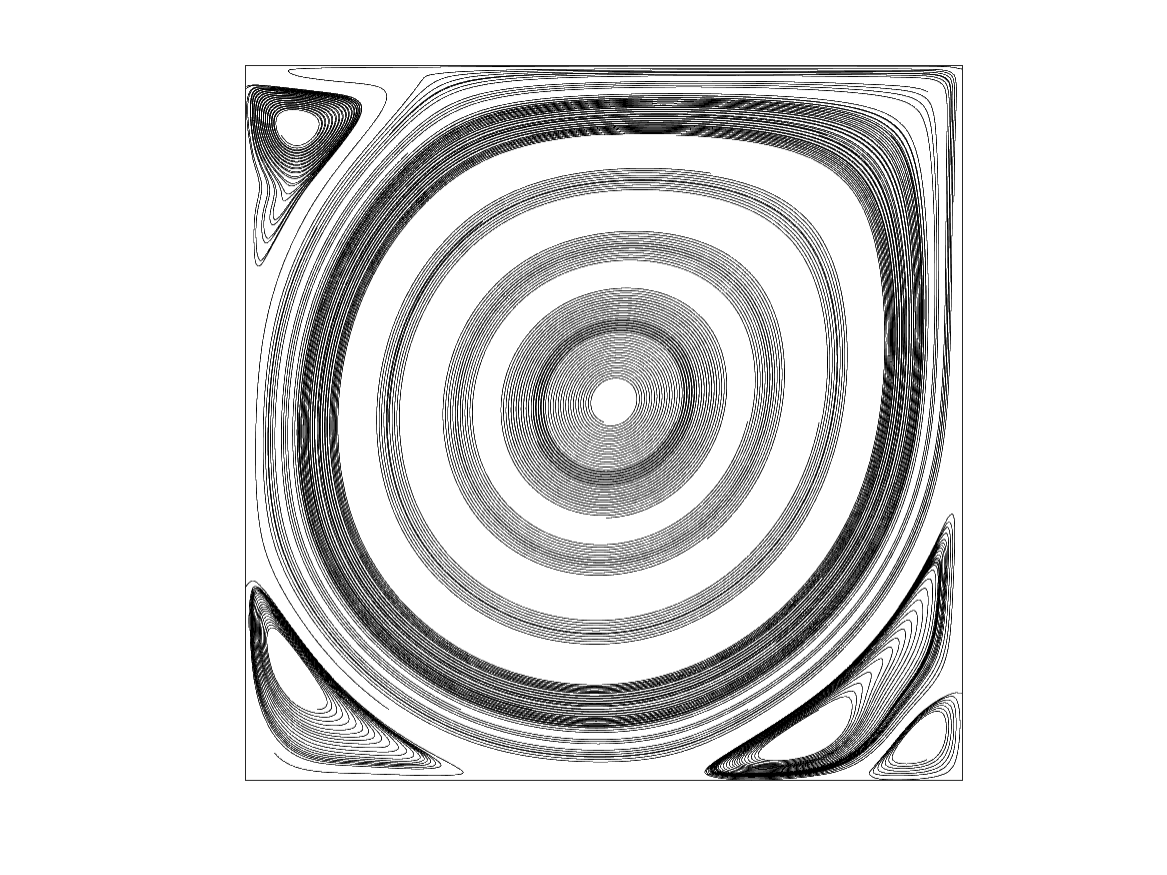}  
\caption{\label{dc2d} The plots above shows 2D driven cavity solutions as velocity streamlines, for varying $Re$.}
\end{figure}

The 2D driven cavity is a classical benchmark test problem that has domain $\Omega=(0,1)^2$,  no forcing $f=0$, homogeneous Dirichlet velocity boundary conditions on the sides and bottom and a moving lid $[ 1,0 ]^T$ on the top.  For this problem, $Re=\nu^{-1}$.  We compute with $Re$=5000 and 10000, and we note that our solutions were found to be in agreement with those from the literature \cite{ECG05}, see Figure \ref{dc2d}.   For discretizations, we use $h=\frac{1}{64},\ \frac{1}{128},\ \frac{1}{196},\ \frac{1}{392}$ meshes built from barycenter refinements of uniform meshes; the finest mesh has 923K velocity degrees of freedom (dof) when equipped with $(P_2,P_1^{disc})$ SV elements.  The stopping tolerance was the nonlinear residual falling below $10^{-8}$ in the $V'$ norm.  We test several aspects of NGMRES-Picard on this test problem.

\subsubsection{Illustration of convergence theory}

\begin{figure}[h]
\center
\includegraphics[width = .48\textwidth, height=.3\textwidth,viewport=0 0 750 410, clip]{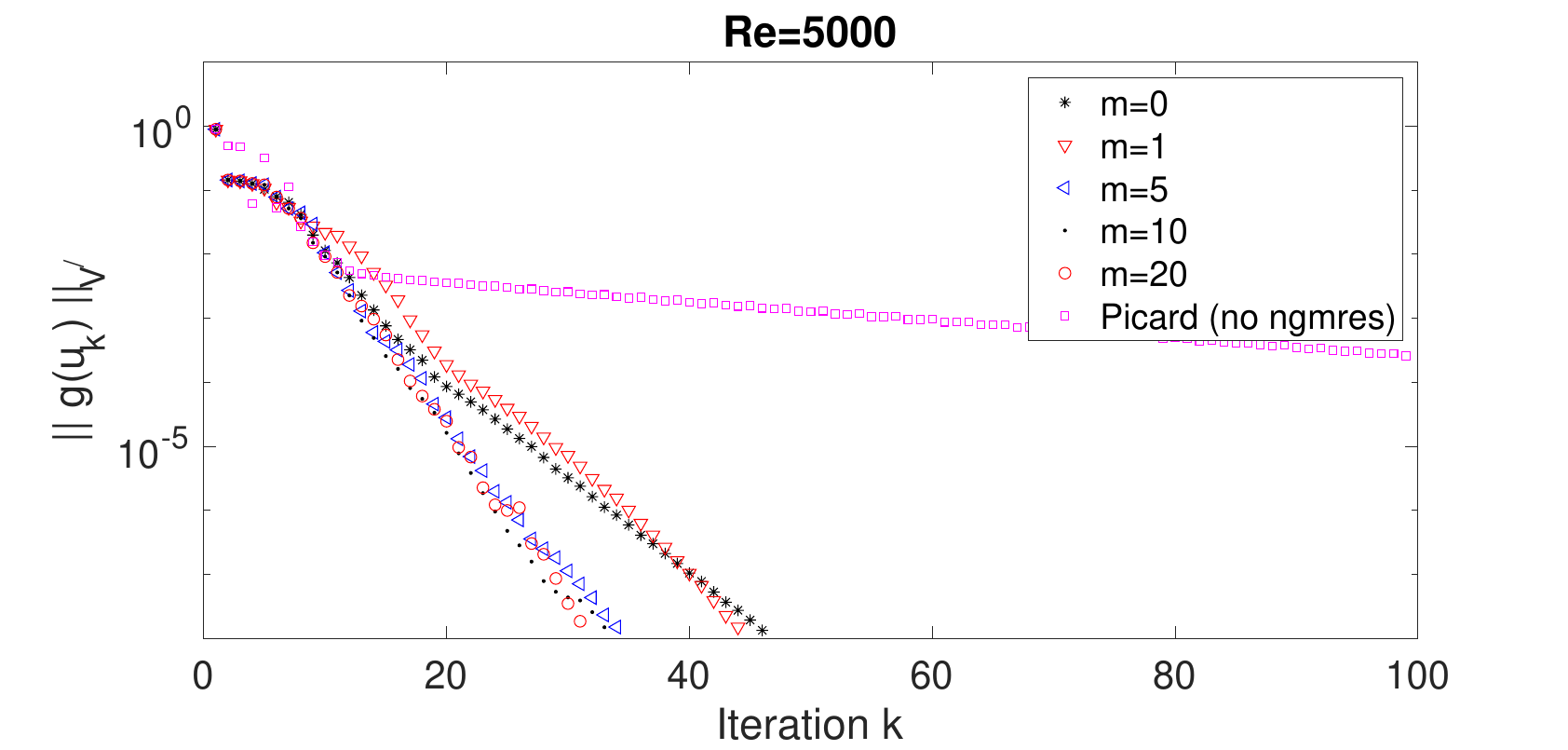}  
\includegraphics[width = .48\textwidth, height=.3\textwidth,viewport=0 0 750 410, clip]{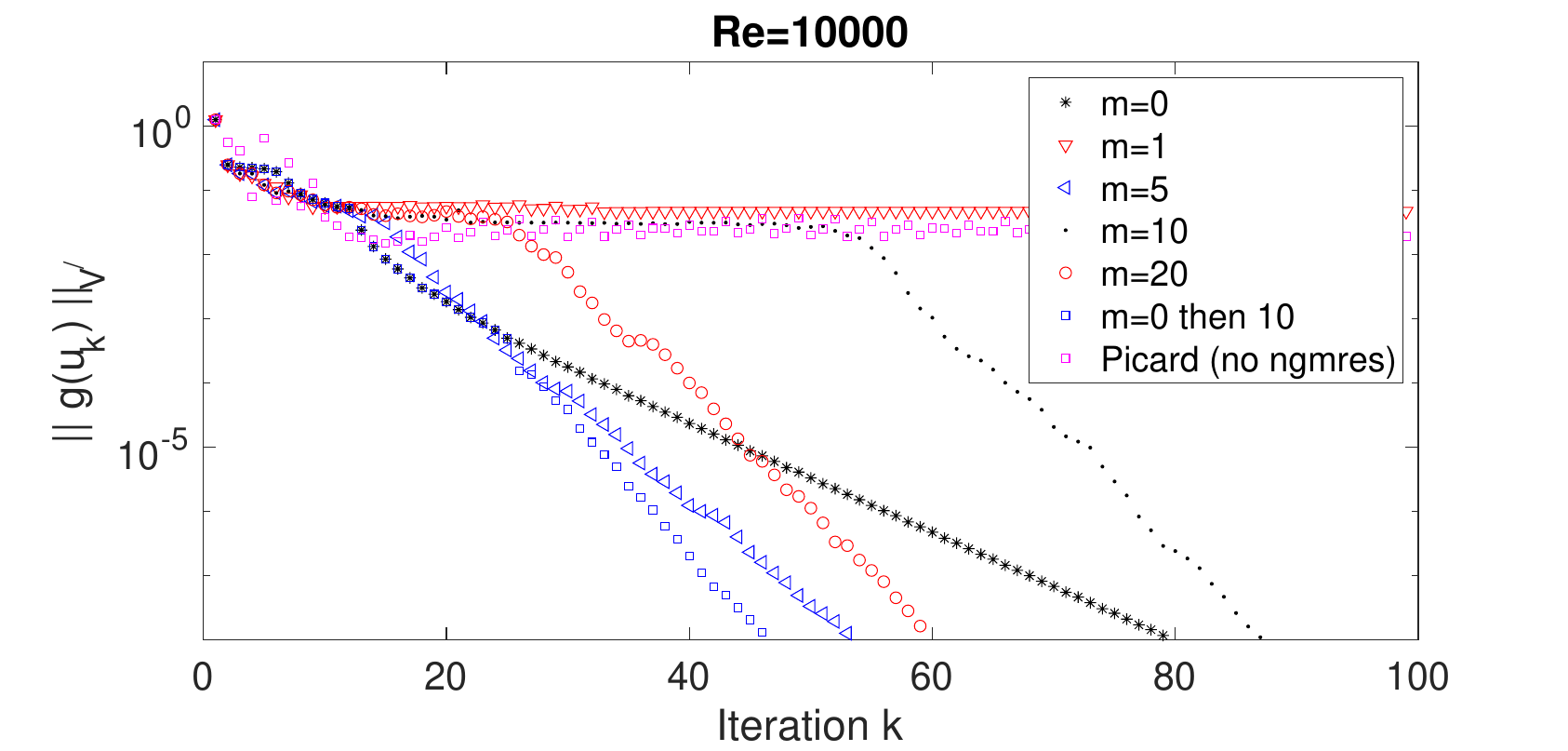}  \\
\includegraphics[width = .48\textwidth, height=.3\textwidth,viewport=0 0 750 410, clip]{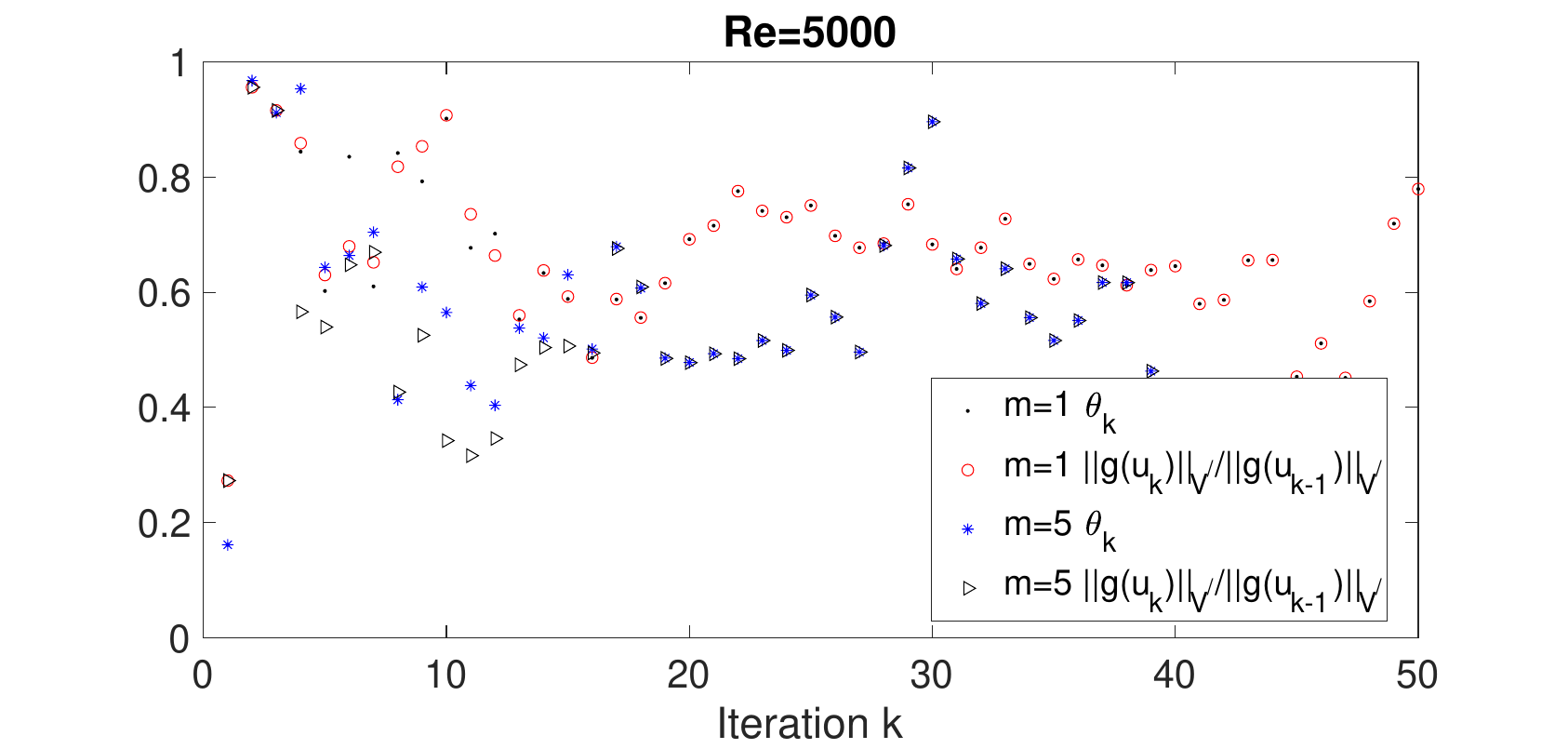}  
\includegraphics[width = .48\textwidth, height=.3\textwidth,viewport=0 0 750 410, clip]{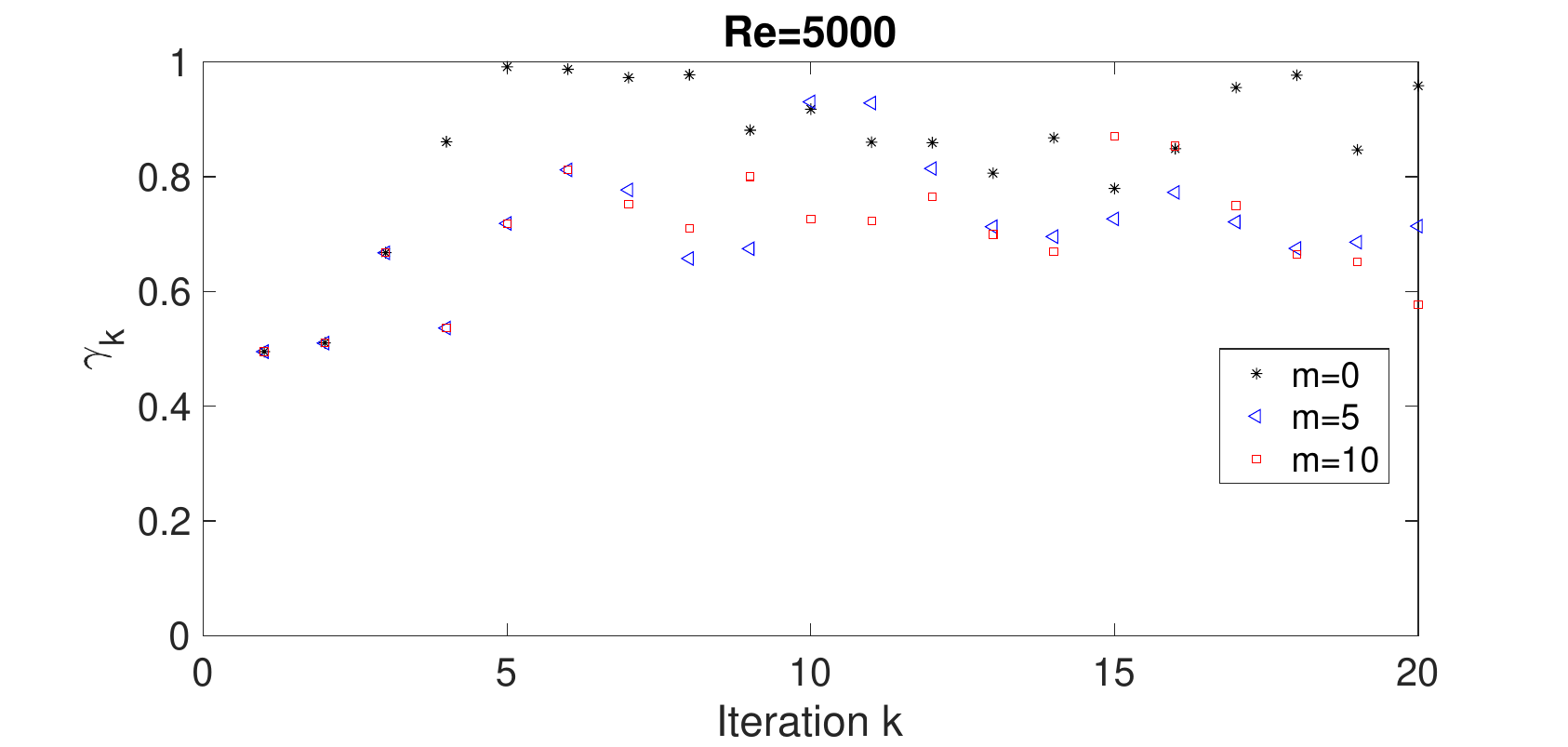}  
\caption{\label{dcconv} The plots above show (top) convergence of NGMRES for NSE 2D driven cavity for varying $m$, and (bottom) plots of $\theta_k$ and $\gamma_k$  compared to the ratio $\| g(u_k) \|_{V'} / \| g(u_{k-1}) \|_{V'}$.}
\end{figure}

For $Re$=5000 and 10000, we test convergence for varying $m$ using $h=\frac{1}{256}$, and results are shown in Figure \ref{dcconv}.  Results were run on the other meshes, and similar results were found; this is discussed in more detail in Section {\color{black}4}.1.2.

For $Re$=5000, we observed that while usual Picard does not converge in 100 iterations, it has a steady linear decline and we suspect it is contractive in this case and will eventually converge (perhaps in another 600 iterations or so).  Convergence is dramatically improved by NGMRES, as we observe NGMRES-Picard with $m=0$ and $m=1$ converge in about 45 iterations and $m=5,10,20$ converge in 30 iterations or so.  

For $Re$=10000, we observe that usual Picard does not reduce the residual in the first 100 iterations, and from the plot we expect it will never converge (but will remain bounded, as is discussed in Section 2).  NGMRES-Picard, however, does converge (except for $m=1$), and convergence with $m=0$ and $m=10$ takes about the same number of iterations (although their paths to convergence were quite different), and $m=5$ and $m=20$ showed similar convergence behavior.  Our convergence theory shows there are higher order terms which can be large/dominant when the residual is large, and the higher $m$ is, the more higher order terms there are; this helps {\color{black}explain} why the slope for $m=0$ is best at the early iterations but after the residual gets smaller the convergence is faster for larger $m$.  When $m=1$ there is no convergence, perhaps because the early iterations were not good and pushed it into a direction from which it could not recover.  We also ran a test where $m=0$ was used until the residual fell below $10^{-3}$ and then switched to $m=10$; this gave the best convergence results.

The theory above from Theorem \ref{thmm} proves that
\begin{align*}
\| g(u_{k}) \|_{V'} & \le \theta_{k} \| g(u_{k-1})   \|_{V'} + \mbox{ higher order terms, and} \\
\| g(u_{k}) \|_{V'} & \le \gamma_{k} \kappa \| g(u_{k-1})   \|_{V'} + \mbox{ higher order terms}.
\end{align*}
To test the sharpness of $\theta_k$, we compare $\theta_k$ to $\frac{\| g(u_{k}) \|_{V'}}{\| g(u_{k-1})   \|_{V'}}$ in Figure \ref{dcconv} (bottom left) for varying $m$ and $Re$=5000.  We observe some variation in these quantities at early iterations, but for larger $k$ (i.e. once the residual gets small) we see agreement that is visually indistinguishable in the plots.  This shows the theory is quite sharp for $\theta_k$, once the iteration gets close to the root (we tested this using different $m$ and $Re$=10000, and the same sharp agreement was found).  We also observe, as expected, that $\theta_k$ for $m=5$ are overall smaller than for $m=1$.  Our theory is for 1 step, so it does not apply directly to make sure each $\theta_k$ is smaller at each step, but overall the minimization using $m=5$ is over a bigger space and thus a better gain of the optimization problem is expected.

We also plot $\gamma_k$ in Figure \ref{dcconv} at bottom right for varying $m$ for the first 20 iterations (to make viewing the plot easier).  $\gamma_k$ drives the acceleration that NGMRES provides, and from the plot we observe that as $m$ gets larger, $\gamma_k$ gets smaller.  Since we never know $\kappa$ exactly, knowing the $\gamma_k$ is not as sharp of a predictor as $\theta_k$ for residual reduction.

\subsubsection{Test of mesh independence}

\begin{figure}[h]
\center  
\includegraphics[width = .55\textwidth, height=.35\textwidth,viewport=0 0 750 410, clip]{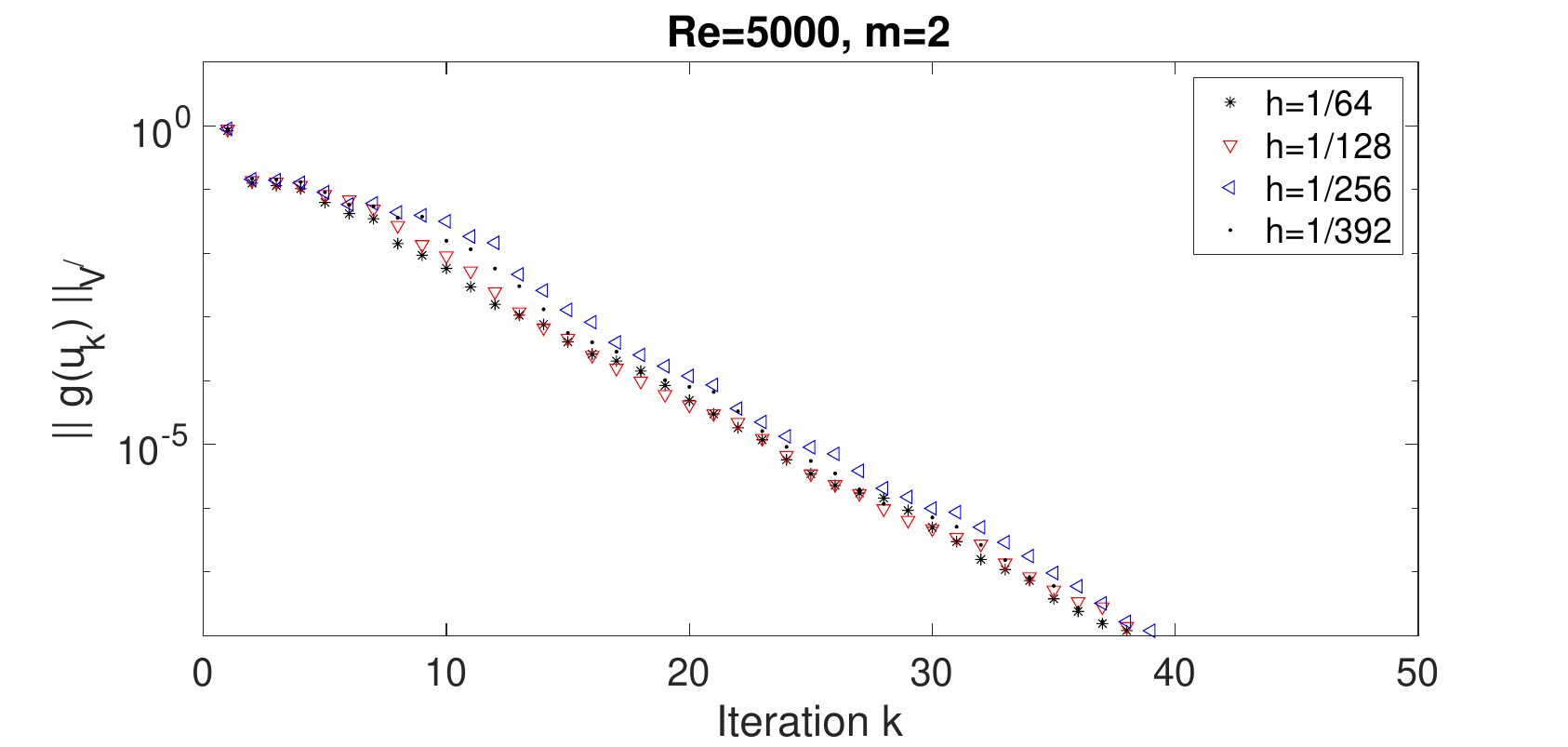}  
\caption{\label{meshind} The plot above show convergence results for $Re$=5000 with varying mesh widths $h$, and no mesh dependence is observed.}
\end{figure}

We now numerically test the mesh independence of the convergence, and note that our theory showed no mesh dependence for the convergence.  To test this, we chose $Re$=5000 and $m$=2, and ran tests for four successively refined meshes.  Results are shown in Figure \ref{meshind}, and for each $Re$ we observe almost exactly the same convergence behavior on the varying meshes. We reran this test with varying $Re$ and $m$ and always saw the same mesh independent convergence.

\subsubsection{Test of $V'$ optimization norm versus $\ell^2$}

\begin{figure}[h!]
\center
\includegraphics[width = .48\textwidth, height=.3\textwidth,viewport=0 0 750 410, clip]{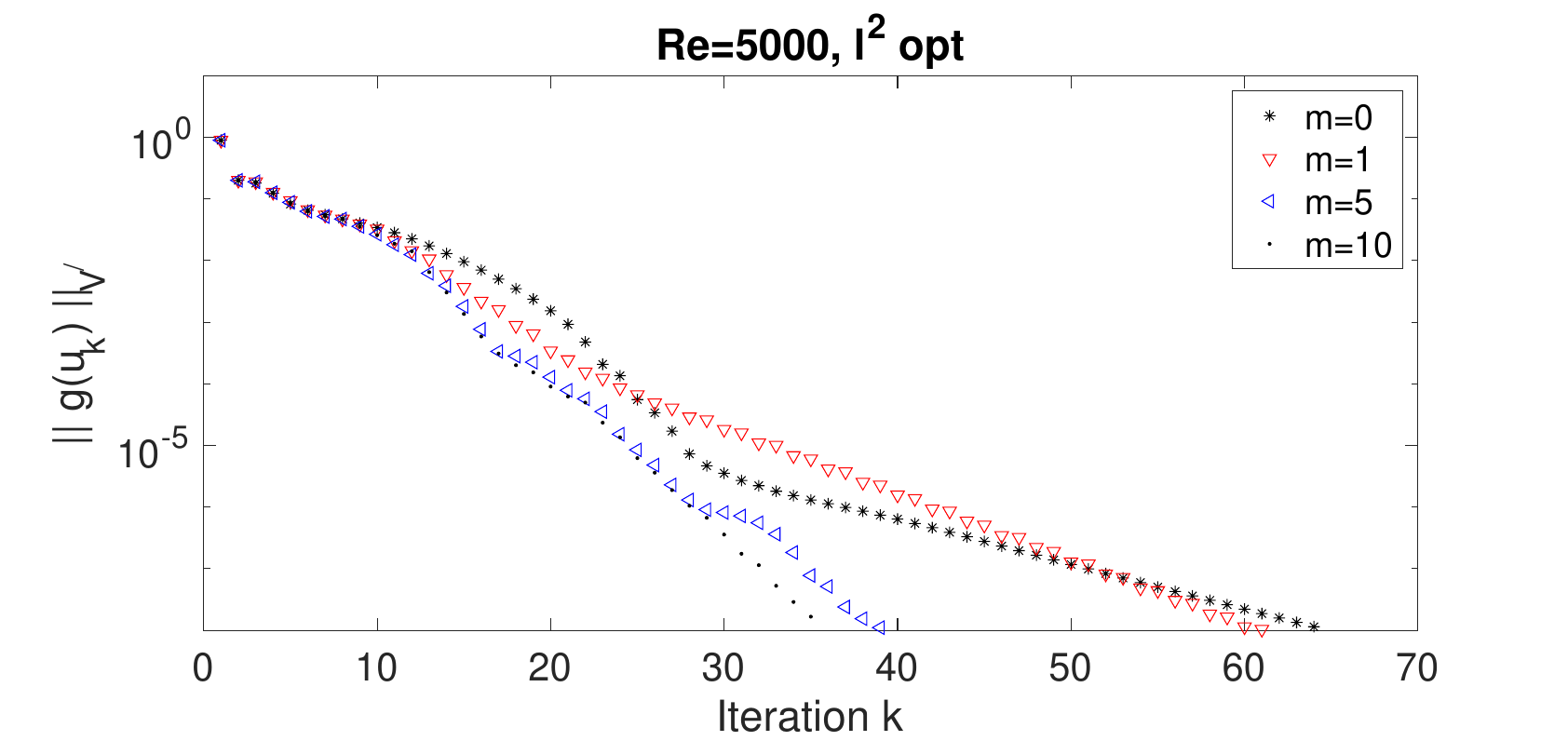}  
\includegraphics[width = .48\textwidth, height=.3\textwidth,viewport=0 0 750 410, clip]{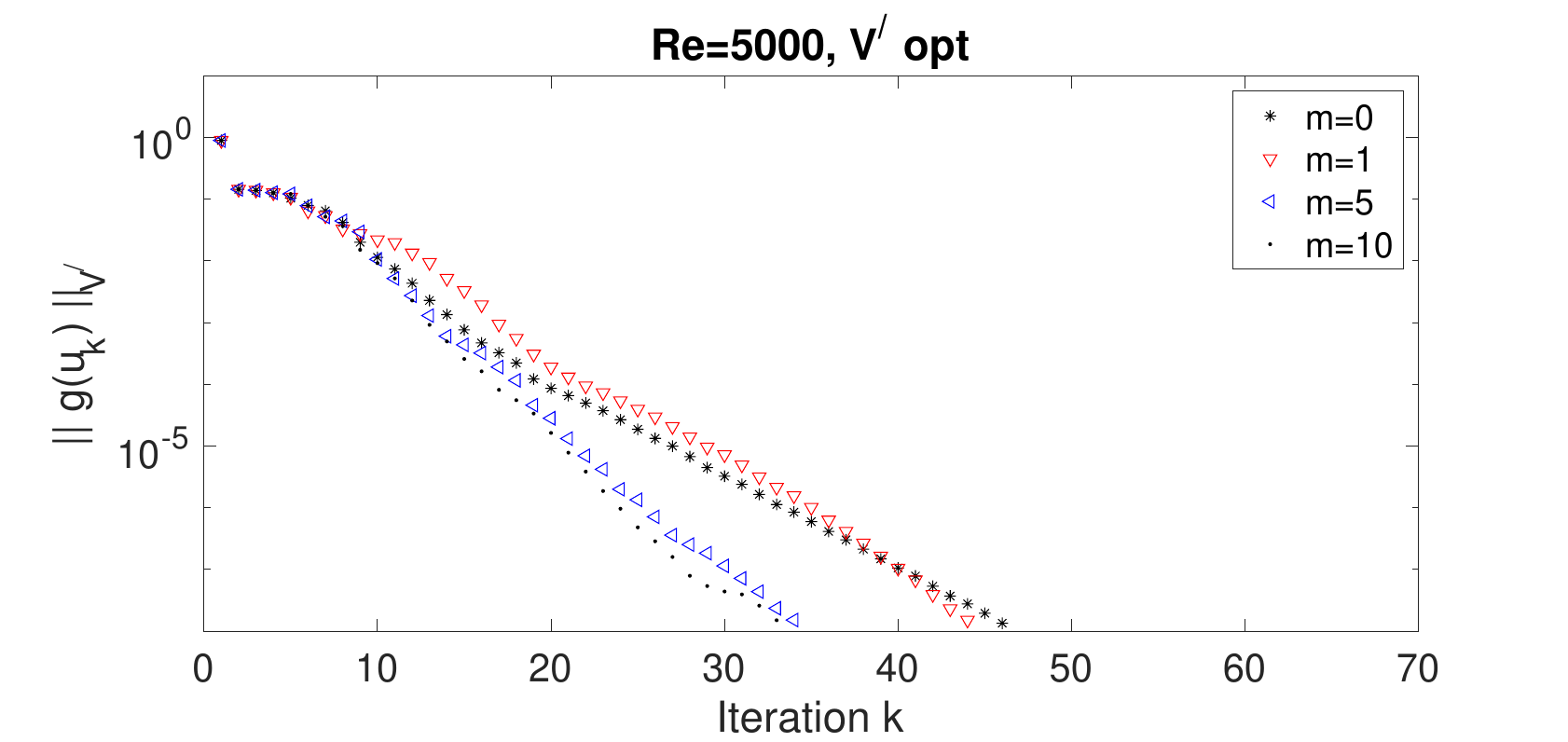}  
\caption{\label{dcconvopt} The plots above show convergence of NGMRES-Picard for NSE 2D driven cavity for varying $m$, using $Re$=5000, and $\ell^2$ optimization norm (left) and $V'$ optimization norm (right).}
\end{figure}

The convergence theory herein relies on the optimization norm being $\| \cdot \|_{V'}$. However, by far the most common choice for the NGMRES optimization problem in computations is $\ell^2$, due to its ease of use and long history of success.  Indeed, this paper is the first complete (single-step) convergence theory for NGMRES, and is also the first paper to suggest that the optimization norm should be anything other than $\ell^2$ (note that using $V'$ herein is specific to this problem since this is the space where the nonlinear residual resides, and so in other problems we expect the optimization norm to change accordingly).  

{\color{black} Using $V'$ requires an extra Stokes solve at each iteration, and while this is significantly cheaper than a Picard solve it is not negligible like $\ell^2$ would be.  More specifically, in these tests, a Picard solve takes on average 5.70 seconds, and a Stokes solve takes on average 0.93 seconds.  Details of the Picard solve are given above, and for Stokes we use a block LU factorization (e.g. as in \cite{benzi}) and need to perform 2 solves with the 1,1 block and 1 Schur complement solve.  Since it is the same Stokes matrix in each iteration, we precompute the Stokes matrix and pre-factor the 1,1 block.  The Schur complement is solved using conjugate gradient, using the augmented Lagrangian preconditioning approach of \cite{benzi} with tolerance $10^{-8}$.  On finer meshes, we would expect the relative cost of a Stokes solve to decrease due to the precomputed matrix assembly and 1,1 block factorization.
}

Hence we test NGMRES-Picard for the NSE using $V'$ and $\ell^2$ optimization norms for $Re$=5000, varying $m$, and $h=\frac{1}{256}$.  Results are shown in Figure \ref{dcconvopt}.  We observe that for $m=0$ and $m=1$, convergence is significantly better when the $V'$ norm is used.  However, for $m=5$ and $m=10$, the results show no significant difference when the different optimization norms are used.  This is consistent with our discussion in Section 3.3 about results from the different optimization norm choices.

\subsection{3D driven cavity}

\begin{figure}[h]
\center
$Re$=1000 \\
\includegraphics[width = .9\textwidth, height=.27\textwidth,viewport=120 0 1180 340, clip]{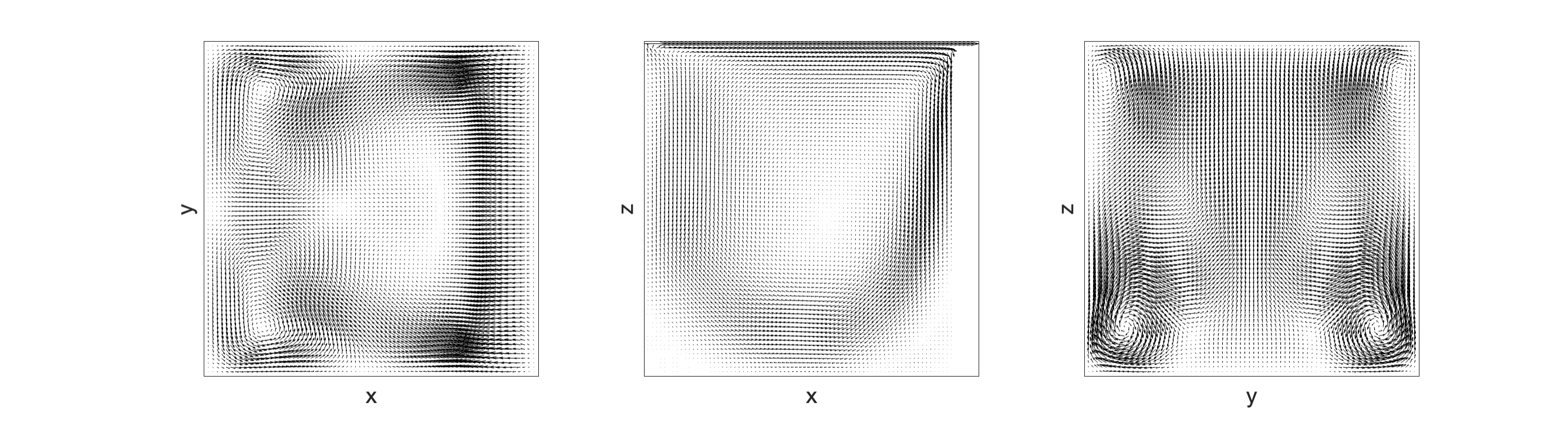}  \\
\caption{\label{cav3d} The plot above shows the $Re$=1000 3D driven cavity velocity solution displayed as midsliceplanes of velocity.}
\end{figure}

For our next test problem, we use the 3D driven cavity benchmark test.  This is an analogue to the 2D problem above, where again there is no external forcing, the domain is the unit cube, and 
homogeneous Dirichlet boundary conditions are enforced for velocity on the walls but $[1, 0, 0]^T$ is enforced as the `lid'.  The mesh is constructed by starting with Chebychev points on $[0, 1]$ to create a $ \mathcal{M}^3$ grid of rectangular boxes. Each box is split into 6 tetrahedra, and then each tetrahedra is barycenter refined into 4 tetrahedra. Computations used $\mathcal{M}=$11 and 13, which provided 796K and 1.3M dof respectively; as above, we found no significant difference in results on the different meshes and show only results for the finer mesh.  $Re=\nu^{-1}$ for this test, and we tested using $Re$=400 and 1000. Midspliceplane velocity vector solutions for $Re = 1000$ are shown in Figure \ref{cav3d}, and these match those from the literature well \cite{WB02}.  The linear solver tolerances of the outer solvers were set as $10^{-10}$, and a convergence tolerance of $10^{-7}$ in the $V'$ norm is used.

\subsubsection{Illustration of convergence theory}

\begin{figure}[h!]
\center
\includegraphics[width = .48\textwidth, height=.32\textwidth,viewport=0 0 750 410, clip]{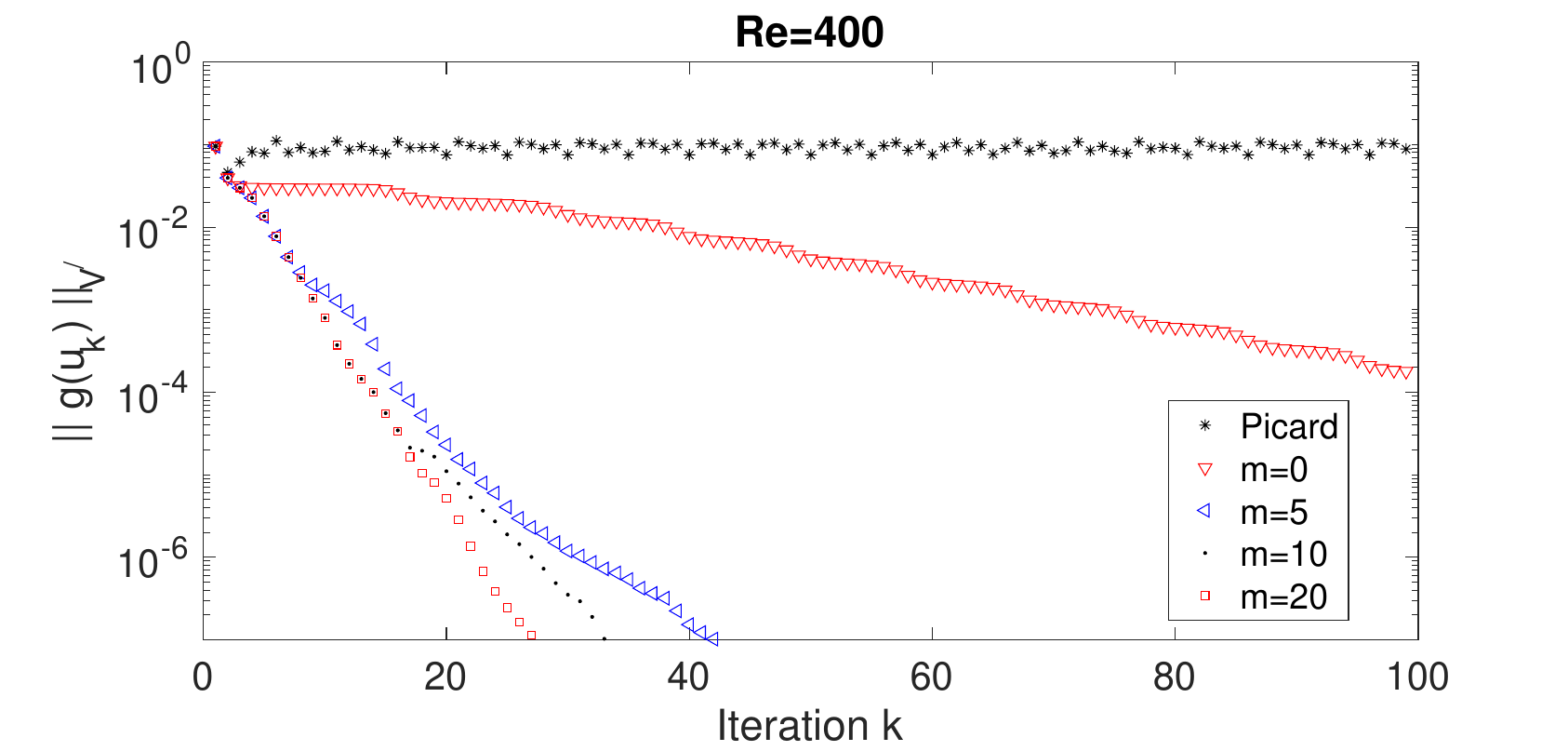}  
\includegraphics[width = .48\textwidth, height=.32\textwidth,viewport=50 0 750 410, clip]{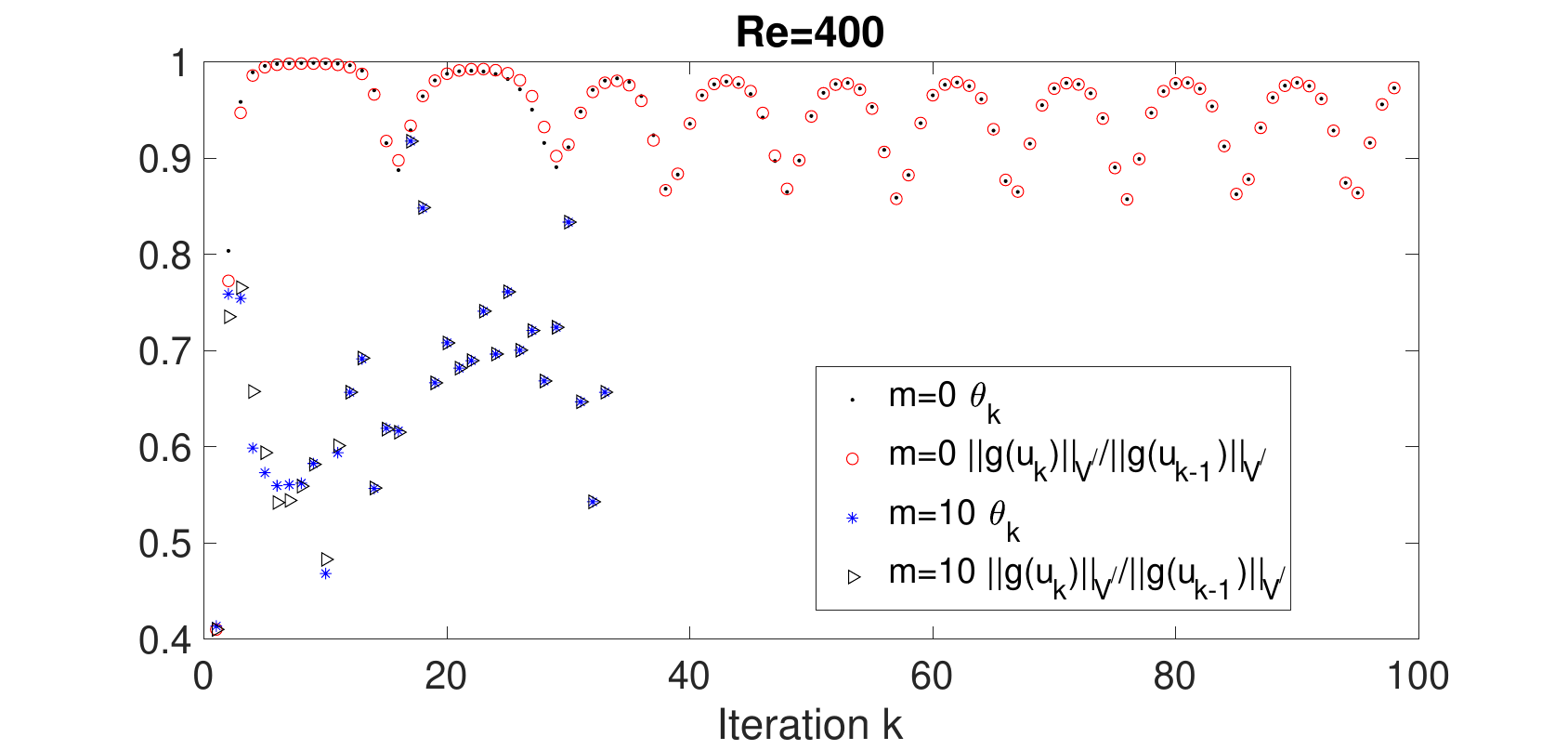}  \\
\includegraphics[width = .48\textwidth, height=.32\textwidth,viewport=0 0 750 410, clip]{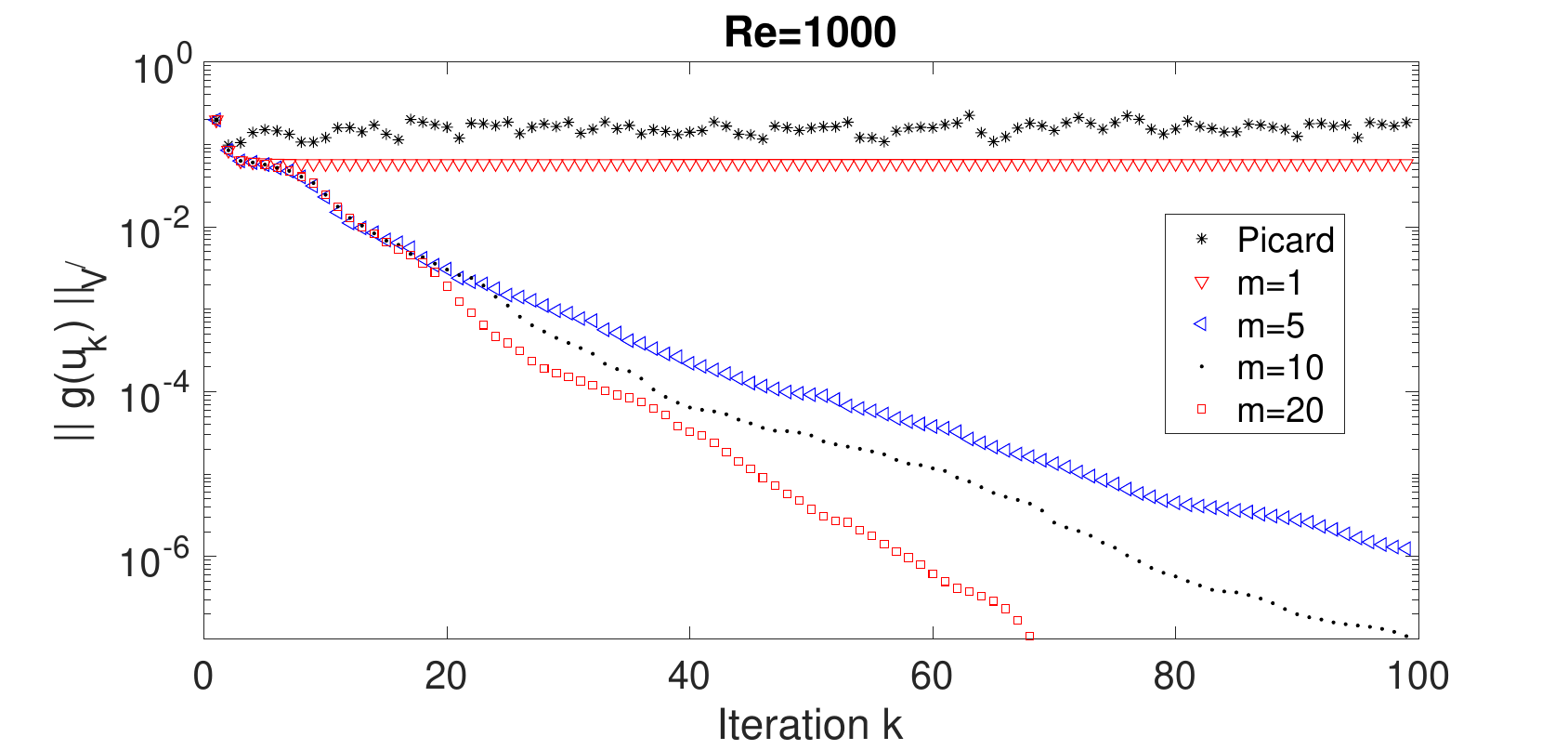}  
\includegraphics[width = .48\textwidth, height=.32\textwidth,viewport=50 0 750 410, clip]{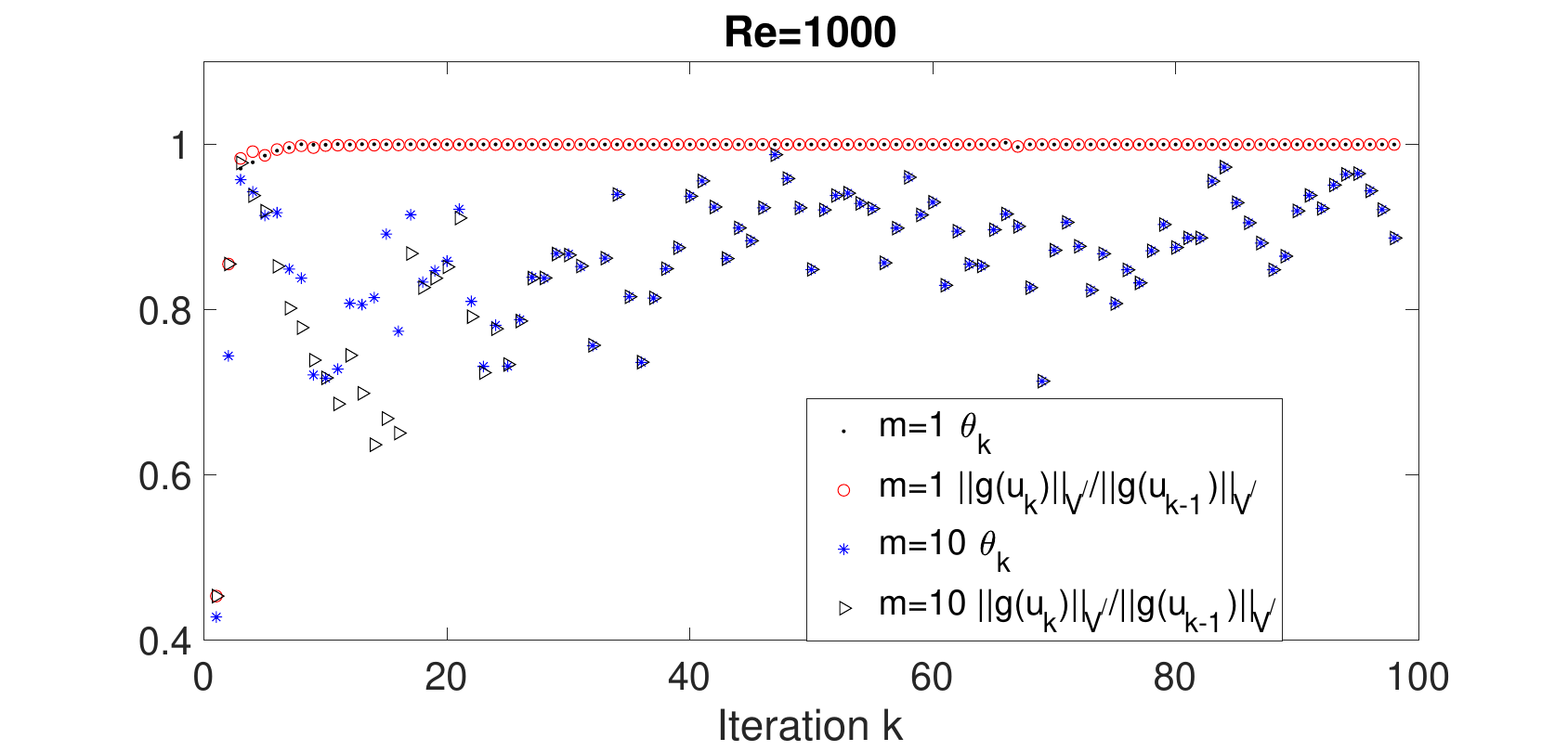}  
\caption{\label{dc3conv} The plots above show (left) convergence of NGMRES-Picard for NSE 3D driven cavity, for varying $m$ and (right) $\theta_k$ and the ratio $\| g(u_k) \|_{V'} / \| g(u_{k-1}) \|_{V'}$, for $Re$=400 and 1000.}
\end{figure}

Convergence results for the 3D driven cavity tests are shown in Figure \ref{dc3conv}, with the $V'$ optimization norm being used.  As expected from \cite{PRTX25}, Picard fails to converge in 100 iterations for both $Re$=400 and 1000.  Moreover, we note the converge plot is essentially flat and it does not appear that these iterations will ever converge.  For $Re$=400, adding NGMRES provides improvement in convergence even with $m=0$ (although it does not converge in 100 iterations, it has a clear downward trend), and each of $m=5,\ 10,\ 20$ all enable rapid convergence (with modest improvement for $m=20$ over $m=10$, and modest improvement of $m=10$ over $m=5$).  For $Re$=1000, $m=1$ fails to convergence or even show a negative slope in the convergence plot.  Convergence improvement by NGMRES is observed for $m\ge 5$: $m=10$ is able to convergence in 98 iterations, and $m=20$ converges in 68 iterations.  Hence for both $Re$=400 and 1000, taking $m=20$ provides the best results.

Also shown in Figure \ref{dc3conv} at right are plots of $\theta_k$ and the ratio $\frac{\| g(u_{k}) \|_{V'}}{\| g(u_{k-1})   \|_{V'}}$.  We observe, in agreement with the theory and just as in the 2D driven cavity problem, that after some initial iterations these quantities agree to a high precision and are visually indistinguishable in the plots.

\subsubsection{Test of $V'$ optimization norm versus $\ell^2$}
\begin{figure}[h!]
\center
\includegraphics[width = .48\textwidth, height=.32\textwidth,viewport=0 0 750 410, clip]{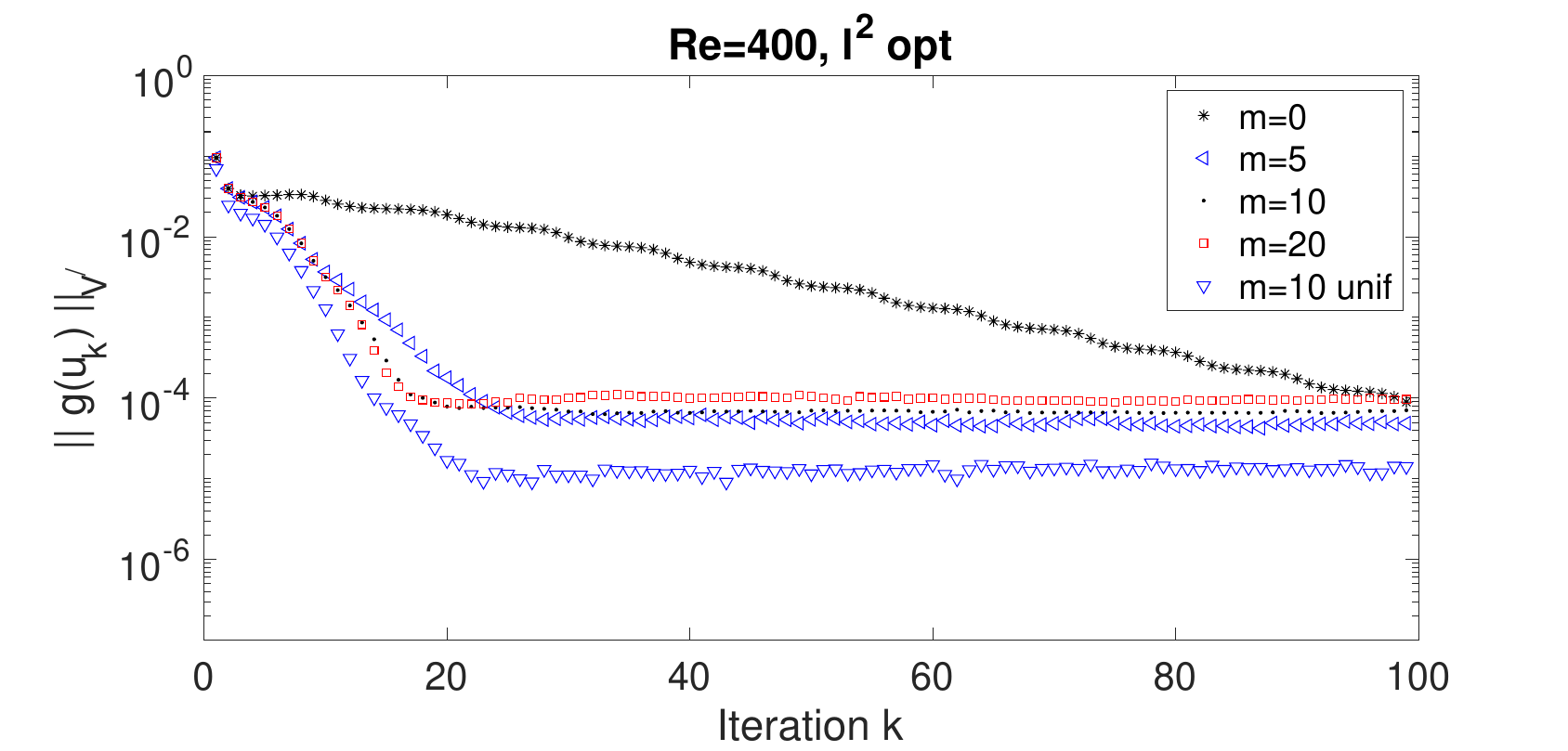}  
\includegraphics[width = .48\textwidth, height=.32\textwidth,viewport=0 0 750 410, clip]{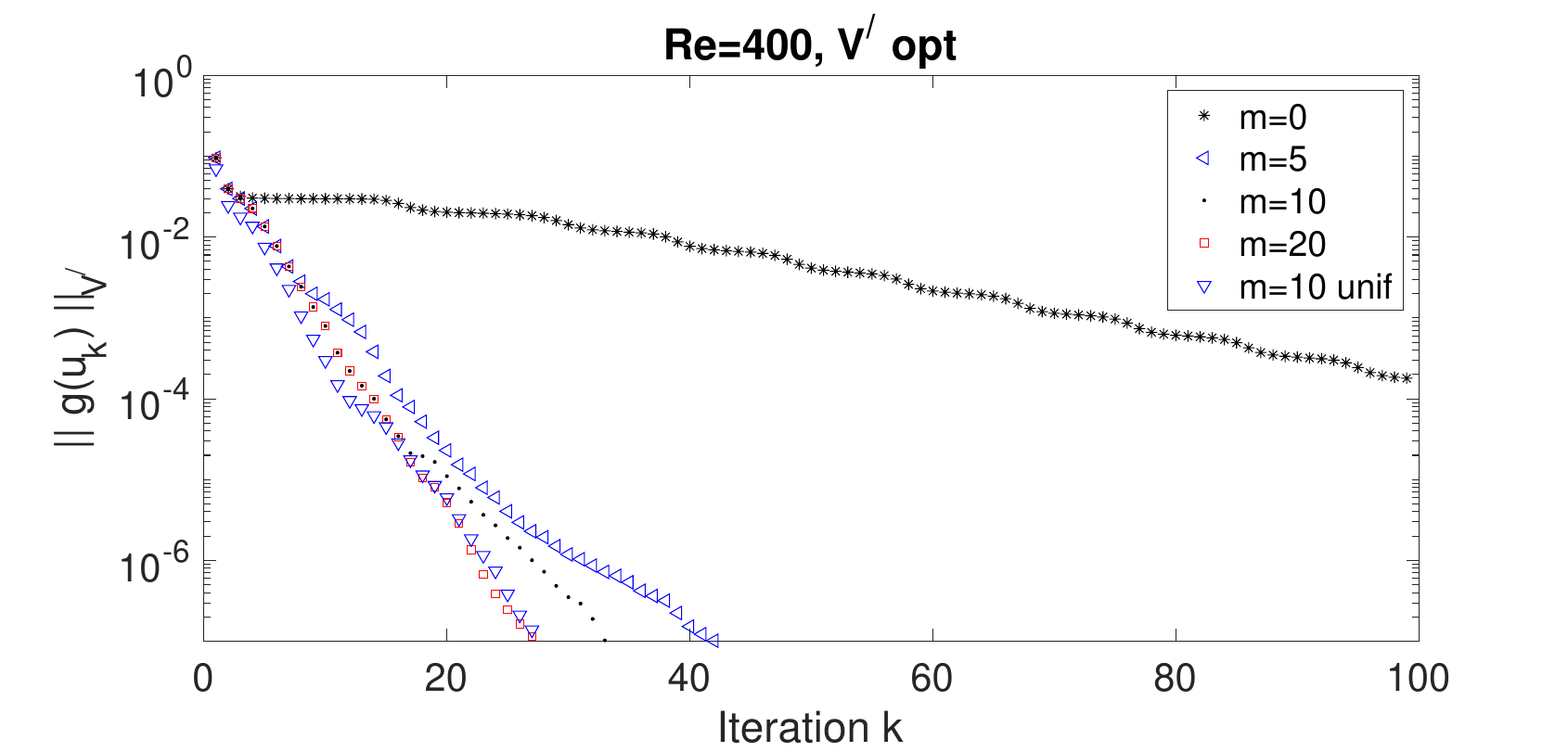}  
\caption{\label{dc3convopt} The plots above show convergence of NGMRES-Picard for NSE 3D driven cavity for varying $m$, using $Re$=400, and $\ell^2$ optimization norm (left) and $V'$ optimization norm (right).  The plots labeled `unif' use a uniform mesh instead of one generated from Chebychev points.}
\end{figure}

We now test convergence with the different optimization norms using $Re$=400.  Results are shown in Figure \ref{dc3convopt}, and we observe that convergence results using $\ell^2$ as the optimization norm are poor in comparison to using $V'$.  For each $m$ using $\ell^2$, the error reaches a minimum of around $10^{-4}$ (before it reaches this minimum, convergence behavior is similar to that found using $V'$ optimization norm).  This same failure to achieve convergence and reaching a minimum when the $\ell^2$ norm is used for optimization was found using $Re$=1000 and on different meshes.  While we have no theory for the 3D case using this norm,  in Section \ref{sec:opt-norm} we show how in 3D discretizations the convergence theory of this paper can fall apart if the $L^2$ optimization norm is used since the nonlinear term can no longer be bounded without the inverse inequality (which blows up the estimate).  

We also ran one additional test for both optimization norm choices, where we used a uniform spacing of the initial rectangular box discretization instead a Chebychev spacing.  We used $m=10$ for this test, and in the plots it is labeled as `$m=10$ unif'.   For the case of 
$V'$ optimization norm, this change had little effect on convergence, as expected.   However, for the $\ell^2$ optimization norm, the error now bottoms out near $10^{-5}$, suggesting the uniform mesh and larger minimum mesh width helped the convergence.  This is consistent with the discussion in Section \ref{sec:opt-norm} about using the $\ell^2$ optimization norm in 3D for NGMRES-Picard.

{\color{black}
\subsubsection{Comparison to AA}
As discussed in the introduction, AA is similar to NGMRES in that it is an extrapolation method based on a residual-minimizing optimization problem.  AA takes the form, for depth $\hat m\le k+1$,
\[
 				u_{k+1} = \sum_{i=k-\hat m}^{k+1} \hat \alpha^{k+1}_i q(u_{i-1}),
\]
where $\big(\hat \alpha^{k+1}_{k+1},\ \hat\alpha^{k+1}_{k},\cdots,\ \hat\alpha^{k+1}_{k-\hat m}\big)$ solve the least-squares optimization problem
 \begin{equation}\label{eq:min-NG3} 
 				\min_{\sum_{i=k-\hat m}^{k+1} \hat \alpha_i^{k+1} = 1} 
				\|  \sum_{i=k-\hat m}^{k+1} \hat\alpha^{k+1}_i (q(u_{i-1})-u_{i-1}) \|_{X}^2.
 \end{equation}
Here, $X$ is the appropriate norm since it is the range norm of $q$ \cite{PRX19,HR25}.  The key differences between AA and NMGRES are that the extrapolation is different and that AA uses a fixed-point residual while NGMRES uses the true nonlinear residual.  These differences lead to significant differences in the theory above compared to AA theory in \cite{PRX19,PR25}, including that NGMRES' convergence bound has better higher order terms than AA (e.g. for depth 0, NGMRES higher order terms are quadratic while for AA they are less than quadratic) and the NGMRES optimization problem implicitly creates a term that accurately predicts the linear convergence rate.

It is an open problem to classify what types of problems/iterations are significantly better suited to be used with NGMRES over AA (or vice versa), and so a comparison of the two methods for Picard for NSE is in order.  Hence we now test the 3D driven cavity with $Re$=400 and 1000 using both NGMRES and AA, for varying depths.  Note that depth 1 NGMRES corresponds to $m=0$, while depth 1 AA corresponds to $\hat m=1$.  For fair comparison we use depths of 1, 5, 20 for both AA and NGMRES.

Results are shown in Figure \ref{compare1}.  For both $Re$=400 and 1000,
%\todo{What is the norm you use? Maybe mention it in the paper} 
we note that Picard (i.e. with no acceleration) does not convergence and its nonlinear residual (in the $V'$ norm) remains O(1).  We observe for $Re$=400 that AA and NGMRES give very similar results for depths 5 and 20, but for depth 1 NGMRES is steadily reducing the residual and appears to eventually convergence while depth 1 AA is not making any progress towards convergence.  For $Re$=1000, only AA with depth 20 converges, while for NGMRES depth 20 convergens (slightly slower than for AA with depth 20) and depth 5 also converges (in about 120 iterations).  We conclude that for this problem, for larger depth AA and NGMRES have fairly similar convergence behavior, but for smaller depths NGMRES performs better.  We note these are initial tests and the authors plan more in depth comparisons of AA and NGMRES in future work.

\begin{figure}[h!]
\center
\includegraphics[width = .48\textwidth, height=.32\textwidth,viewport=0 0 660 420, clip]{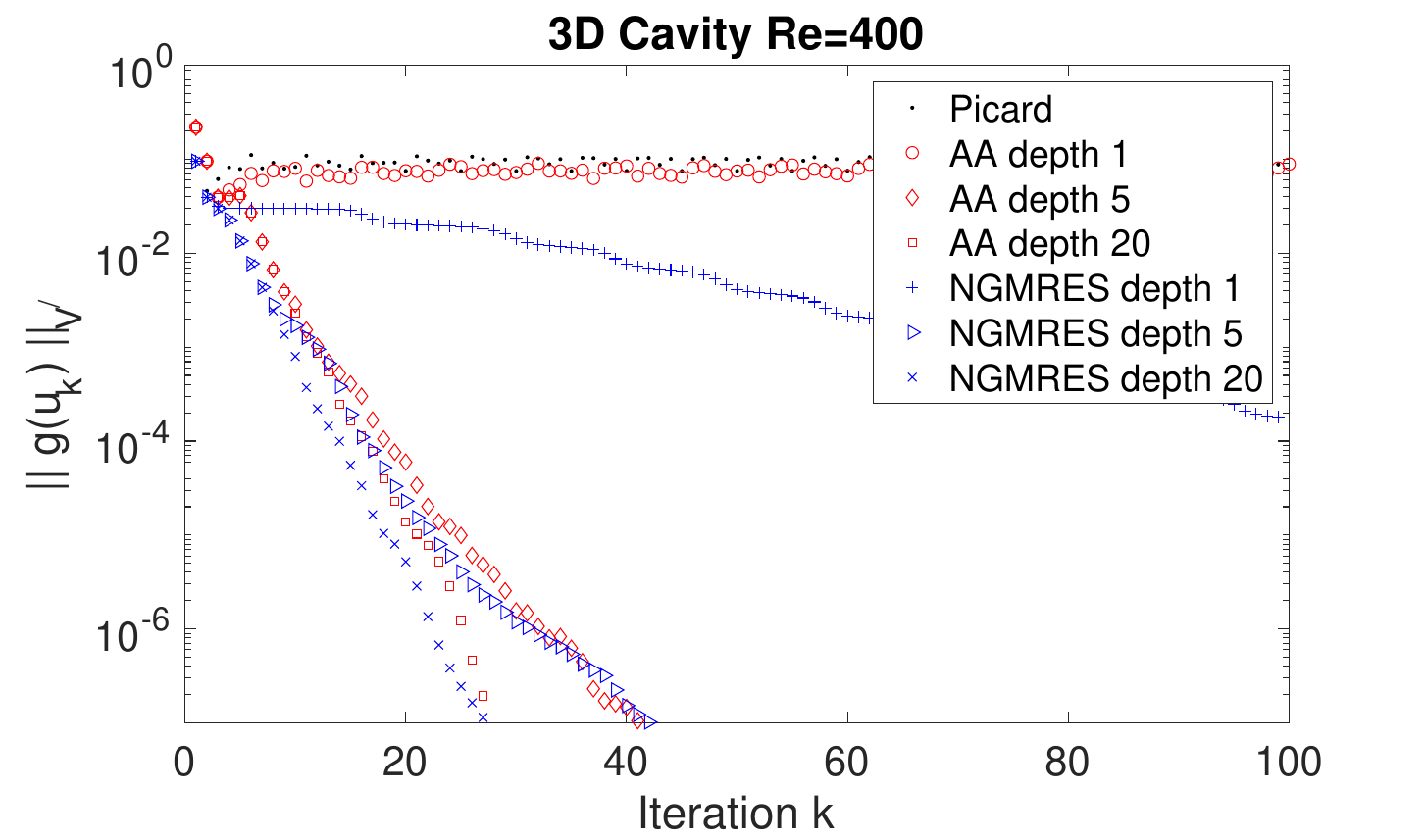}  
\includegraphics[width = .48\textwidth, height=.32\textwidth,viewport=0 0 660 420, clip]{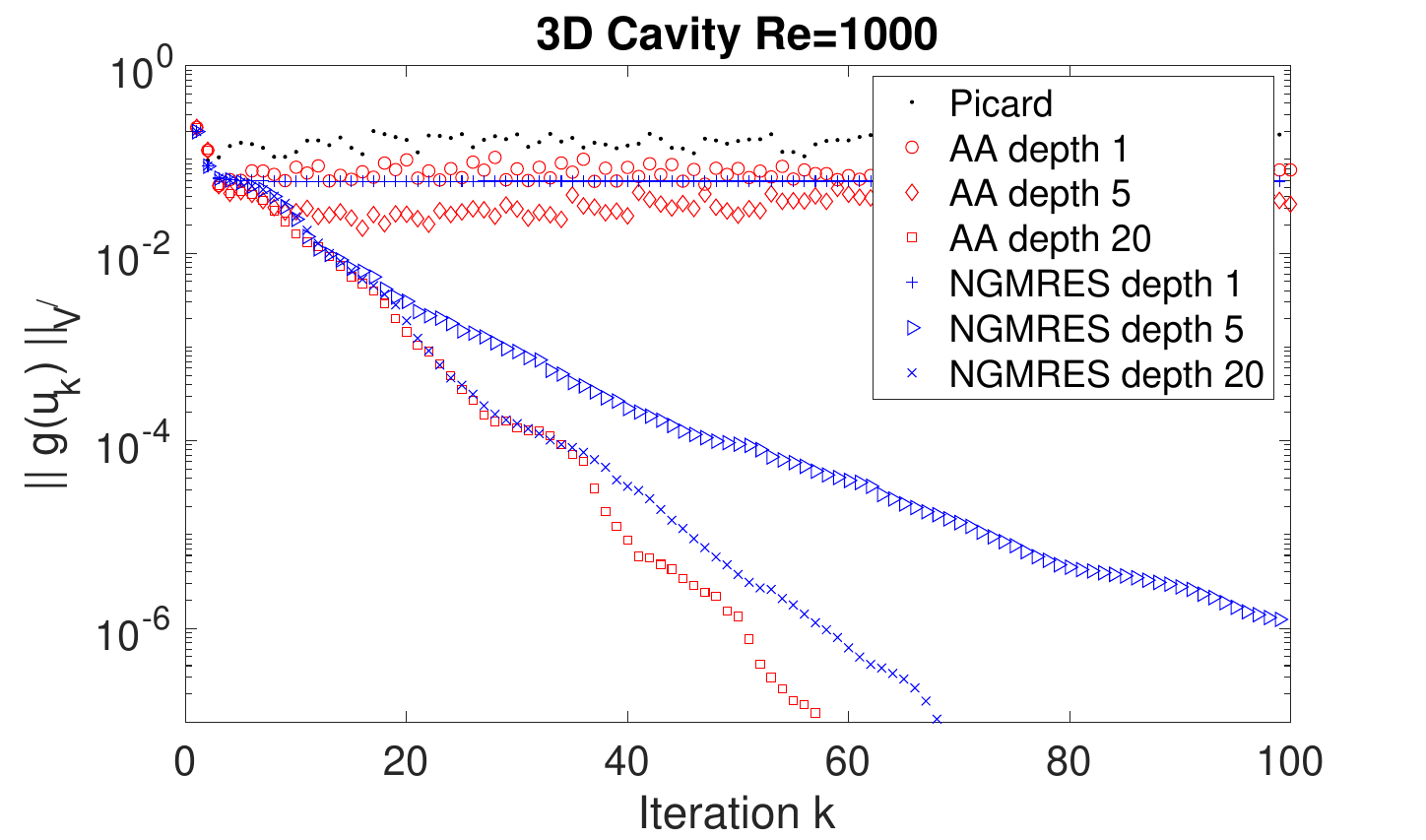} 
\caption{\label{compare1} The plots above show convergence of AA and NGMRES for the 3D driven cavity with varying $Re$ and extrapolation depth.}
\end{figure}

}

\subsection{Flow in a stenotic artery}

\begin{figure}[h!]
\center
\includegraphics[width = .9\textwidth, height=.27\textwidth,viewport=0 0 1200 300, clip]{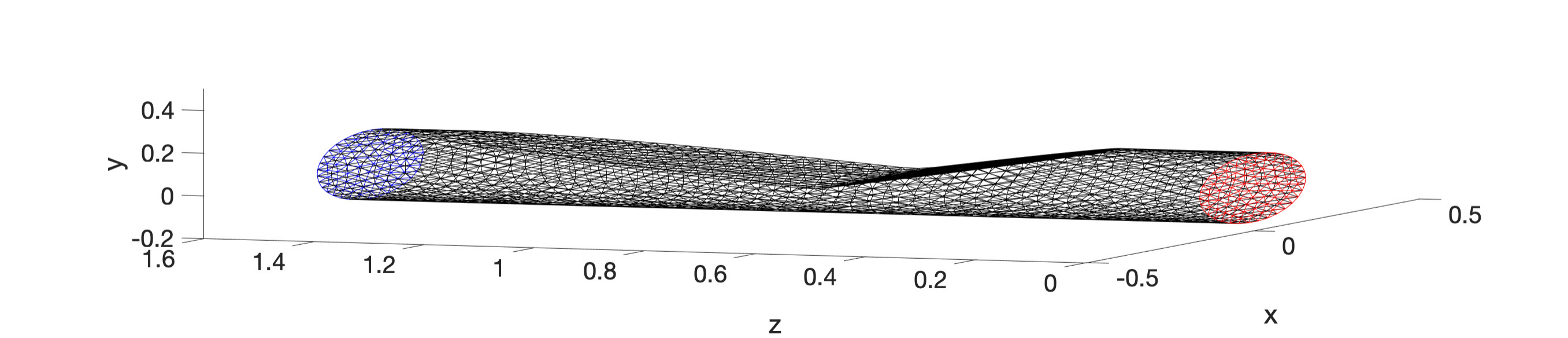}
\caption{\label{arterymesh} Shown above is the artery mesh (before the barycenter refinement is applied) restricted to the surface. }
\end{figure}

\begin{figure}[h!]
\center
\includegraphics[width = .9\textwidth, height=.27\textwidth,viewport=50 0 1200 300, clip]{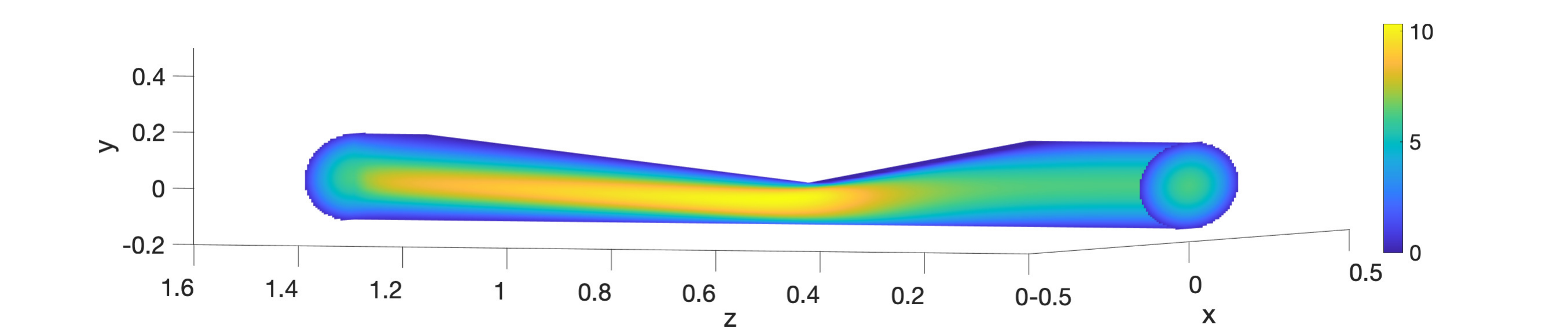}  
%% the original caption
% \caption{\label{artery100} Shown above are contour slices of the velocity magnitude for the $\nu=\frac{1}{500}$ solution found using Anderson accelerated Picard followed by Newton iterations until $H^1$ convergence of successive iterates to $10^{-10}$.}
% \end{figure}

\caption{\label{artery100} Shown above are contour slices of the velocity magnitude for the $\nu=\frac{1}{500}$ solution found using NGMRES accelerated Picard iterations until $H^1$ convergence of successive iterates to $10^{-10}$.}
\end{figure}

\begin{figure}[h!]
\center
\includegraphics[width = .48\textwidth, height=.32\textwidth,viewport=0 0 750 410, clip]{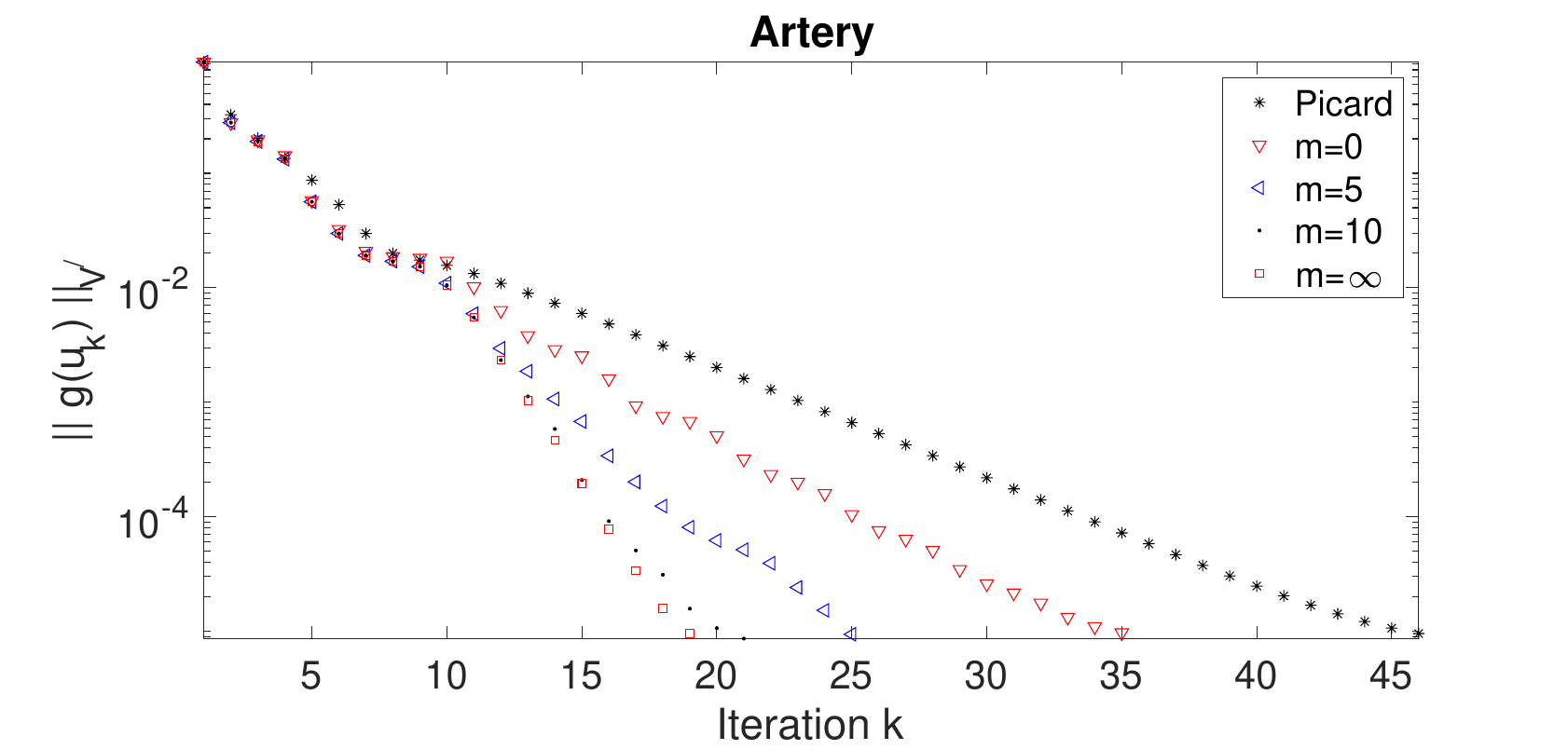}  
\includegraphics[width = .48\textwidth, height=.32\textwidth,viewport=0 0 750 410, clip]{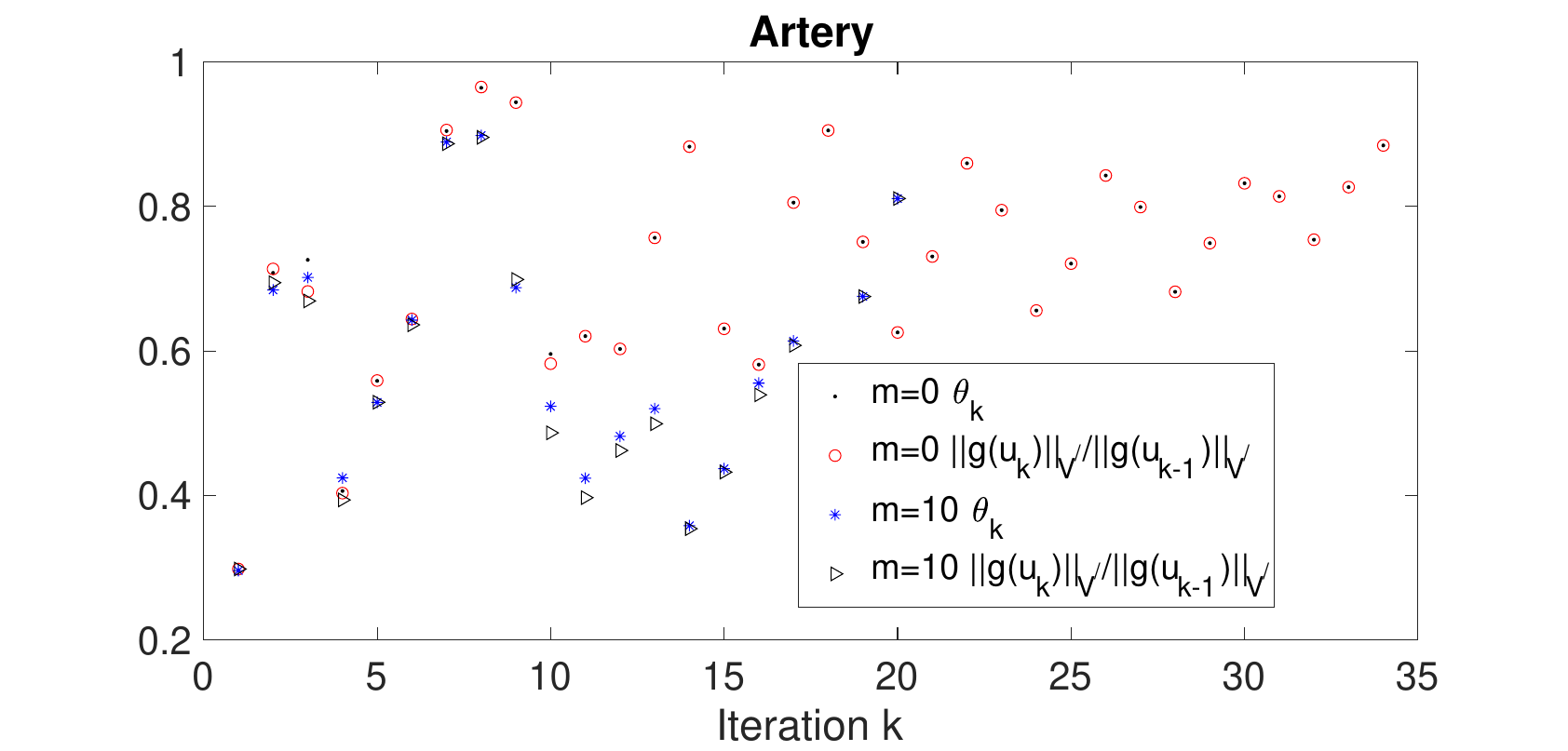}  \\
\includegraphics[width = .48\textwidth, height=.32\textwidth,viewport=0 0 750 410, clip]{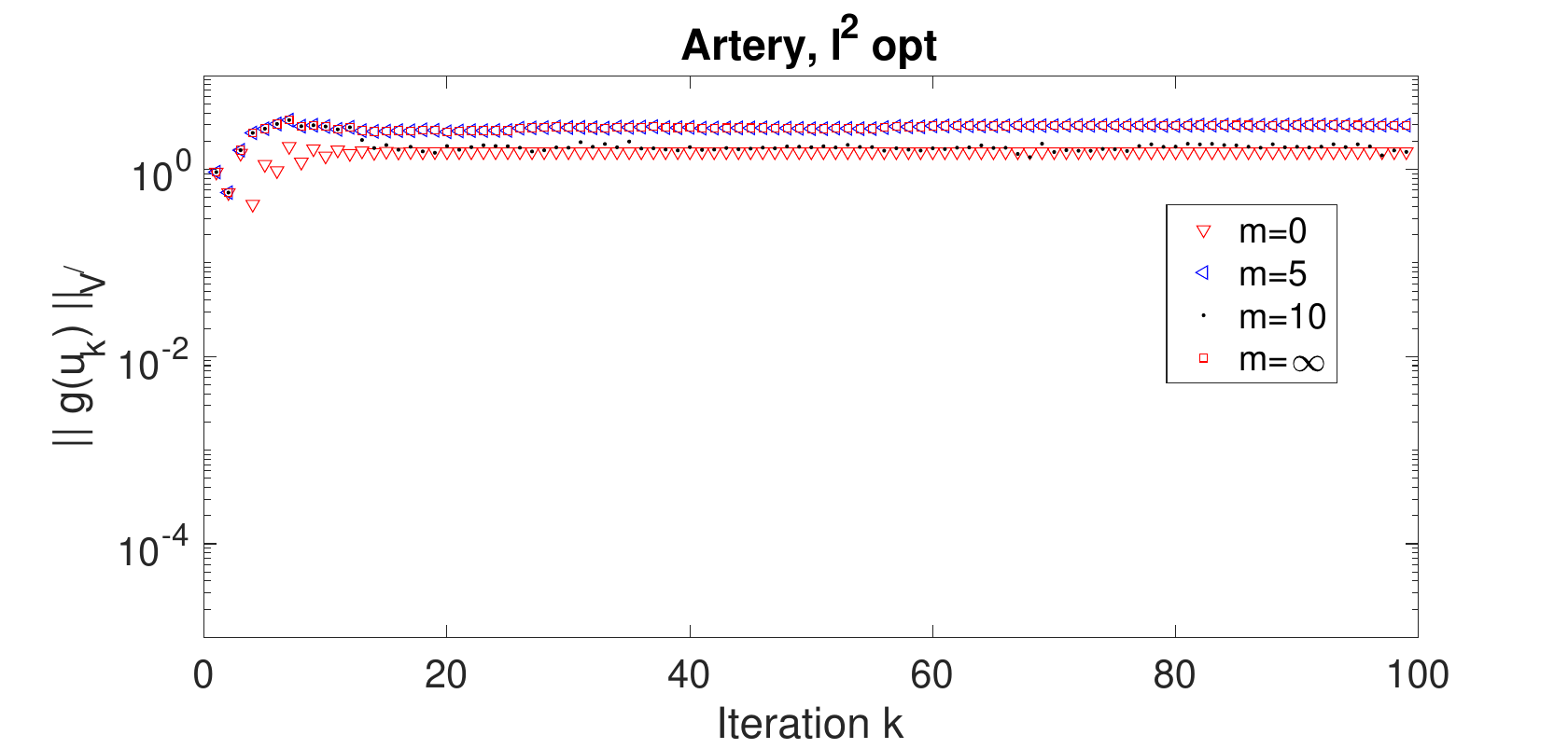}
\includegraphics[width = .48\textwidth, height=.32\textwidth,viewport=0 0 750 410, clip]{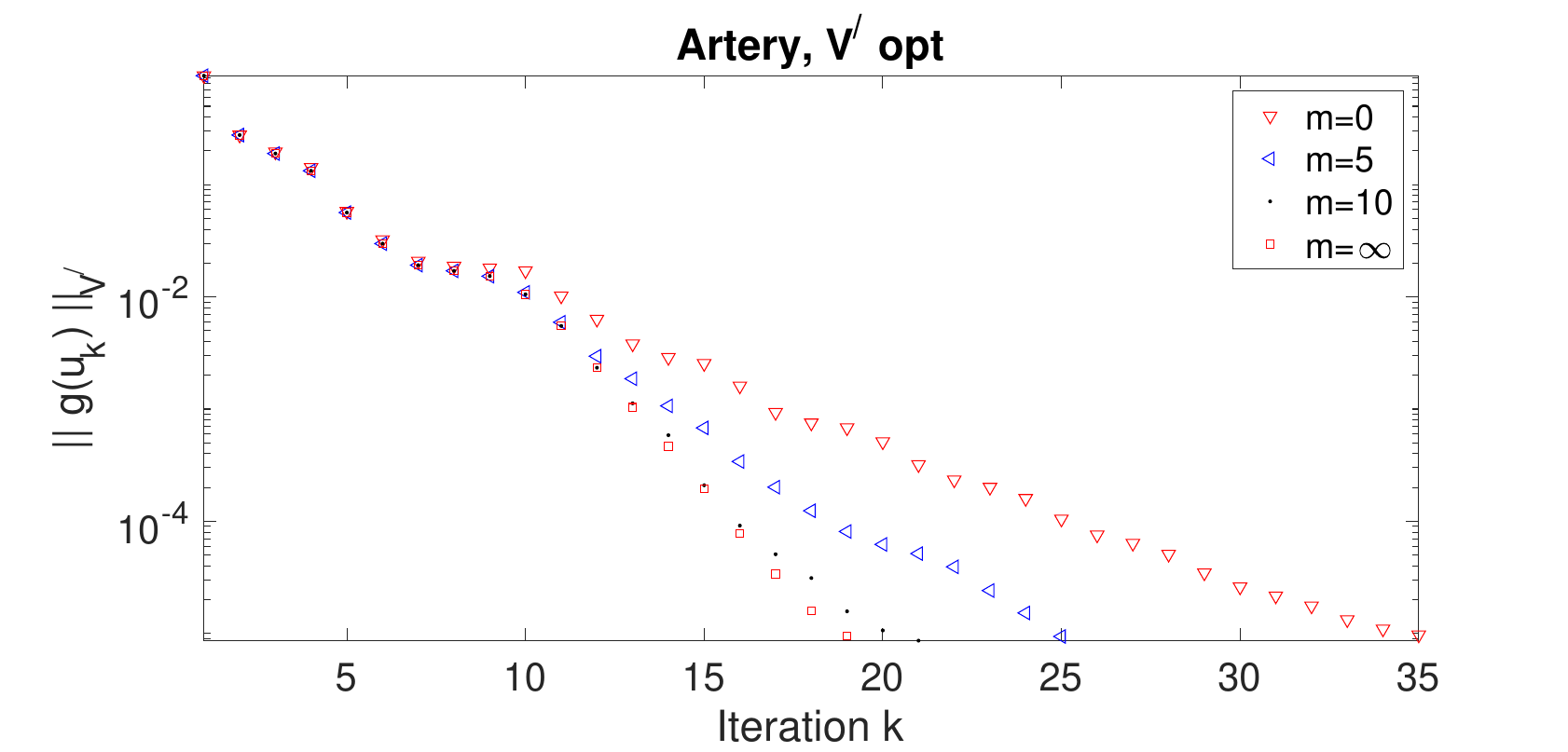}  
\caption{\label{artconv} The plots above show convergence behavior of NGMRES-Picard for 3D artery model, for varying $m$ (top left), $\theta_k$ and the ratio $\| g(u_k) \|_{V'} / \| g(u_{k-1}) \|_{V'}$ (top right), and at bottom is a comparison of convergence results using the $\ell^2$ optimization norm and $V'$ optimization norm.}
\end{figure}

Our third test problem is for a stenotic artery model from \cite{TKLV24}.  The vessel domain is created from a 3D cylinder with diameter $d=0.16cm$ and length $l=1.6cm$, with significant deformation through compressions on the vessel's top.  A plot of the domain is shown in Figure \ref{arterymesh} together with the (pre-barycenter refined) mesh (restricted to the surface) used for our computations with units in $cm$.  No-slip velocity boundary conditions are enforced on the artery walls, and Dirichlet parabolic inflow (at $z=0$) and outflow (at $z=1.6$) are enforced, using a maximum velocity at the vessel center of 6.22 cm/s (providing a 15mL/minute flow rate).  The viscosity is taken to be $\nu=0.002 g/(cm\cdot s)$.

The domain is discretized first from a regular {\color{black} tetrahedralization} (thanks to the authors of \cite{TKLV24} for providing the mesh) that is then barycenter refined, creating 52,912 total elements.  Once equipped with $(P_3,P_2^{disc})$ SV elements, there are 1.33M total dof.  A plot of the solution found on this mesh using NGMRES-Picard with $m=10$ and $V'$ optimization norm is shown in Figure \ref{artery100}.  Due to the mesh aspect ratio together with a linear solver tolerance of $10^{-10}$,  $\| \nabla u_k \|$ could only converge to about $10^{-5}$, which limited convergence of the nonlinear solver, since SV elements were used.  Thus we chose a convergence tolerance of $10^{-5}$ in the $V'$ norm for this problem.

Convergence plots are shown in Figure \ref{artconv} for Picard with no acceleration, and NGMRES-Picard using varying $m$ and the $V'$ optimization norm.  We observe that Picard converges in about 45 iterations and NGMRES-Picard converges in 34, 25, 21 and 18 iterations for $m=0,\ 5,\ 10,\ \infty$, respectively {\color{black}(noting that $m=\infty$ corresponds to choosing $m$ as large as possible, i.e. $m=k-1$ at iteration $k$).}  Hence the accelerated convergence is clear.  Also shown in the figure are plots of $\theta_k$ and $\frac{\| g(u_{k}) \|_{V'}}{\| g(u_{k-1})   \|_{V'}}$, and just as in all tests above, after the initial iterations these quantities agree quite well and showing the convergence estimate of Theorem \ref{thmm} is sharp.

Lastly, we compare convergence for NGMRES-Picard using the $\ell^2$ norm for the optimization to using $V'$.  Here we observe that NGMRES-Picard with $\ell^2$ fails to converge for each of $m=0,\ 5,\ 10, \ \infty$, while it converges well when $V'$ is used.  This is consistent with our discussion in Section 3 and tests above: in 3D, there is no reason to expect good results when using NGMRES-Picard for the NSE.

\section{Conclusions and Future Directions}
In this work we employ NGMRES as an acceleration technique for the Navier–Stokes Picard iteration, and provide a convergence analysis of the NGMRES–Picard iteration for general depth that shows NGMRES provides acceleration through the gain of the optimization problem. Our theoretical results suggest that using the dual norm of the divergence-free space is the appropriate choice in the optimization problem arising in the NGMRES algorithm; our results also indicate that different problems will likely have different best choices of optimization norm.  Numerical experiments for both 2D and 3D problems indicate that NGMRES significantly improves the performance of the Picard iteration, even in cases where the unaccelerated iteration diverges.

{\color{black}Extending the convergence analysis results herein to include stabilizations such as SUPG and also to general nonlinear problems with contractive and noncontractive associated solvers/iterations are important direction for better understanding NGMRES.  Further, while some initial comparisons of NGMRES to AA are given in this work, a more systematic comparison for this problem setting and beyond is a critical next step.  Additionally, in} \cite{he2026ngmresprecon}, the use of the underlying fixed point residual in the NGMRES optimization problem  and two new variants of AA based on reformulated least-squares problems using the Picard residual and nonlinear residual are proposed.  Hence another potential direction for future research is to explore convergence analyses of NGMRES when the $\ell^2$ norm is used in the optimization problem, given its simplicity, and the new variants of AA.

\bibliographystyle{siamplain}
\bibliography{graddiv}

 \end{document}